\theoremstyle{plain}
\newtheorem{prop}{Proposition}[section]
\newtheorem{thm}[prop]{Theorem}
\newtheorem{cor}[prop]{Corollary}
\newtheorem{lem}[prop]{Lemma}
\theoremstyle{definition}
\newtheorem{dfn}[prop]{Definition}
\newtheorem{rem}[prop]{Remark}
\newtheorem{lab}[prop]{}
\newcommand{\N}{{\mathbb{N}}}
\newcommand{\R}{{\mathbb{R}}}
\newcommand{\Z}{{\mathbb{Z}}}
\newcommand{\sfS}{\mathsf{S}}
\newcommand{\SOCF}{\operatorname{SOCF}}
\newcommand{\lin}{\operatorname{lin}}
\DeclareMathOperator{\codim}{codim}
\DeclareMathOperator{\im}{im}
\DeclareMathOperator{\sxdeg}{sxdeg}
\DeclareMathOperator{\supp}{supp}
\DeclareTextFontCommand{\textnf}{\normalfont}
\newcommand{\CP}{\mathrm{CP}}
\newcommand{\conv}{\mathrm{conv}}
\newcommand{\ord}{{\rm ord}}
\renewcommand{\emptyset}{\varnothing}
\renewcommand{\setminus}{\smallsetminus}
\newcommand{\ol}{\overline}
\newcommand{\plus}{{\scriptscriptstyle+}}
\newcommand{\all}{\forall\,}
\newcommand{\ex}{\exists\,}
\renewcommand{\subset}{\subseteq}
\renewcommand{\supset}{\supseteq}
\newcommand{\sa}{semialgebraic}
\newcommand{\bil}[2]{\langle{#1},{#2}\rangle}
\newcommand{\sprod}[2]{\left< #1 \, , \, #2 \right>} 
\newcommand{\setcond}[2]{\left\{#1\,:\,#2\right\}}
\newcommand{\oldversion}[1]{}
\newcommand{\marginnote}[1]{\vrule width0pt height0pt depth0pt
    \vadjust{\vbox to0pt{\vss\hbox to\hsize{\hskip\hsize\quad
    #1\hss}\vskip1.5pt}}}
\begin{document}

\title{Convex hulls of monomial curves,\\and a sparse
positivstellensatz}

\author{Gennadiy Averkov}
\address{G. Averkov, Brandenburgische Technische Universit\"at Cottbus-Senftenberg, Platz der Deutschen Einheit 1, D-03046, Germany} 
\email{averkov@b-tu.de}
\author{Claus Scheiderer}
\address{C. Scheiderer, Fachbereich Mathematik und Statistik, Universit\"at Konstanz, D-78457 Konstanz, Germany}
\email{claus.scheiderer@uni-konstanz.de}

\date\today
\maketitle

\begin{abstract}
Consider  the closed convex hull  $K$ of a monomial curve given
parametrically as $(t^{m_1},\ldots,t^{m_n})$, with the parameter $t$
varying in an interval $I$. We show, using constructive arguments,
that $K$ admits a lifted semidefinite description by $\mathcal{O}(d)$
linear matrix inequalities (LMIs), each of size
$\left\lfloor  \frac{n}{2} \right\rfloor +1$, where
$d= \max \{m_1,\ldots,m_n\}$ is the degree of the curve.
On the dual side, we show that if a univariate polynomial $p(t)$ of
degree $d$ with at most $2k+1$ monomials is non-negative on  $\R_+$,
then $p$ admits a representation
$p = t^0 \sigma_0 + \cdots + t^{d-k} \sigma_{d-k}$, where the
polynomials $\sigma_0,\ldots,\sigma_{d-k}$ are sums of squares and $\deg (\sigma_i) \le 2k$. 
The latter is a univariate positivstellensatz for sparse polynomials,
with non-negativity of $p$ being certified by sos polynomials whose
degree only depends on the sparsity of~$p$.
Our results fit into the general attempt of
formulating polynomial
optimization problems as semidefinite problems with LMIs of small
size. Such small-size descriptions are much more tractable from a
computational viewpoint.
\end{abstract}


\section{Introduction}

Polynomial optimization studies the task
of minimizing an $n$-variate polynomial $p \in \R[x_1,\ldots,x_n]$
over a semialgebraic set $S \subseteq \R^n$. The convexification
approach to this problem consists of picking finite sets
$A\supset\supp(p)$ of $\Z_\plus^n$, considering the monomials
$x^\alpha=x_1^{\alpha_1}\cdots x_n^{\alpha_n}$ with $\alpha\in A$
and finding exact or approximate descriptions of the closed
convex hulls
\[
K=\overline{ \conv \setcond{ (x^\alpha)_{\alpha \in A}}{x \in S}}
\]
within some optimization paradigm. The basic idea is that optimizing
a polynomial
\(
p = c_0 + \sum_{\alpha \in A} c_\alpha x^\alpha \in \R[x_1,\ldots,x_n]
\)
means to optimize the affine-linear function
$\lambda : (y_\alpha)_{\alpha \in A}\mapsto c_0 + \sum_{\alpha \in A}
c_\alpha y_\alpha$
over $K$. Usually, descriptions of the sets $K$ arise from
\emph{positivstellens\"atze} from real algebra, since
non-negativity of $\lambda$ on $K$ corresponds to non-negativity of $p$
on $S$. Most positivstellensätze in real algebra employ
sum-of-squares (sos) certificates. This fact establishes  a connection
from real algebra  to semidefinite optimization, since the cone
$\Sigma_{n,2d}$ of $n$-variate sos polynomials of  degree $\le 2d$ is a
linear image of the cone of symmetric positive semidefinite matrices
of size $\binom{n+d}{n}$. As suggested by Lasserre in his seminal
work \cite{las2001} and following this line of ideas, Putinar's
positivstellensatz \cite{put} leads to a hierarchy of
outer approximations of $K$ in terms of semidefinite
constraints. Although Lasserre's approach provides a universal template
for converting polynomial problems to semidefinite ones, further
adjustment is usually needed
to make it computationally  tractable. Despite the fact that
semidefinite programs are solvable in polynomial time under mild
assumptions within a given error tolerance \cite{ali,nest:nem}, solving
a semidefinite program can quickly become extremely challenging in
practice since the size of the
semidefinite constraints is a critical parameter. We view a semidefinite
problem as the problem of optimization of a linear function subject
to finitely many linear matrix inequalities (LMIs). An LMI of size $d$
is the condition $M(y) \succeq 0$ that imposes semidefiniteness of
a symmetric $d \times d$ matrix $M(y)$ whose entries are affine-linear
functions in the variables $y = (y_1,\ldots,y_N)$.  To
get a first impression  of how the workload is increased by large
size of the LMIs, consider the approximation of a semidefinite problem
$\inf \{ L(y) \,:\, M_1(y) \succeq 0,\ldots, M_\ell(y) \succeq 0 \}$ by
the convex problem $L(y) -  \epsilon \sum_{i=1}^\ell \log \det M_i(y)$
that uses the logarithmic barrier and a small parameter $\epsilon>0$.
Solving the convex problem with the gradient descent method would involve
computating the gradients of the barriers, which requires to invert
the matrices $M_i(y)$ \cite[4.3.1]{bt:nem}.  But inverting
$M_i(y)$ of a large size is known to be expensive. So, while in
theoretical considerations it is customary to model $\ell$ LMIs as a single
LMI of size $\ell d$, using a block-diagonal matrix, this reduction
conceals the aspect of efficiency, since a general LMI of size $\ell d$
has a much higher computational cost than $\ell$ LMIs of size~$d$.

To address this issue, the following terminology was introduced in
\cite{av}:

\begin{dfn}
Let $C\subset\R^n$ be a set. A description of $C$ in the form
\begin{equation}
C = \setcond{ x \in \R^n}{ M_1(x,y) \succeq 0,\ldots, M_\ell(x,y)
\succeq 0 \ \text{for some} \ y \in \R^m} \label{sdp:descr}
\end{equation}
where the $M_i$ are LMIs in $(x,y)$, is called a \emph{lifted} (or
\emph{extended}) \emph{semidefinite representation}
of $C$. If every $M_i$ has size $\le d$, we say that \eqref{sdp:descr}
is a lifted representation
of size~$\le d$. The minimal $d$ for which $C$ admits a lifted
representation
of size $\le d$ is the \emph{semidefinite extension degree} of $C$,
denoted $\sxdeg(C)$. If no such $d$ exists one puts $\sxdeg(C)=\infty$.
\end{dfn}

By studying $\sxdeg(C)$ one keeps track of the size of the LMIs
needed to optimize linear functions over $C$, disregarding their
number. For computations, both size and number of the LMIs
play a role, but
size is more critical as a parameter. Clearly $\sxdeg(C) =1$
if and only if $C$ is a polyhedron, and it is easy to see that
$\sxdeg(C) \le 2$ if and only if $C$ is second-order cone
representable \cite{faw}. In \cite{av} it was shown that the
cone $\Sigma_{n,2d}$ of $n$-variate sos polynomials of degree $\le2d$
has $\sxdeg=\binom{n+d}{n}$, matching the size of the LMIs in
Lasserre's approach.
The rapid growth of $\binom{n+d}{n}$ in $n$ and $d$
explains why the computational time needed to solve Lasserre's
relaxations is extremely sensitive to the choice of $n$ and $d$. This issue
with the size of the LMIs in was detected by a number of researchers. Some
suggestions for how to cope with it in practice were made
in \cite{ahm,av2,wang1,wang}.  The common general idea in
\cite{ahm,av2,wang1,wang}  is to deliberately choose a cone $C$,
which has small $\sxdeg(C)$ by its construction, to serve as a
tractable outer approximation  of $K$. Or, from the dual viewpoint,
to choose a cone $P$ of non-negative polynomials of more specific
structure than in Putinar's positivstellensatz, such that $P$ has
small $\sxdeg(P)$. While these approaches seem to help in practice,
they are purely heuristic, since positivstellens\"atze are still
missing that would guarantee that positivitiy can indeed always be
certified in the intended, computationally less expensive, manner.

In this paper, we make a first step in filling this gap by studying
the size of semidefinite representations for sparse polynomial
optimization problems in one variable.
Consider a set $A = \{m_1,\ldots,m_n\}$ of positive integers
$m_1 > \cdots > m_n > 0$ and a non-degenerate interval
$I \subseteq \R$. For optimizing the univariate polynomial
$p=\sum_{a \in A} c_a t^a \in \R[t]$ on the interval $I$, we are
interested in finding a lifted semidefinite representation of
$K  = \overline{ \conv \{(t^{m_1},\dots,t^{m_n})\colon t\in I\}}
\subseteq \R^n$.
Since the curve $(t^{m_1},\ldots,t^{m_n})$ is a projection of  the
rational normal curve $(t^i)_{1 \le i \le m_1}$ of degree $m_1$,
Lasserre's approach gives a description of size $\left\lfloor \frac{m_1}{2} \right\rfloor+1$. When the
number $n=|A|$ of monomials is small compared to the degree $m_1$,
it is desirable to find an alternative
description of smaller size. Our main result (Theorem~\ref{thm:mon:curve})
shows that the semidefinite extension degree of such $K$ is at most
$\left\lfloor \frac{n}{2} \right\rfloor  + 1$, which is the best
possible bound. Consequently, the size of the descripttion depends
only on the number of monomials, and not on the degree. The description
is completely explicit and follows from a sparse positivstellensatz
(Theorem~\ref{moncasexplicit}). The latter characterizes non-negativity
of a degree~$d$ polynomial $p$ with at most $2k+1$ terms
on an interval $I \subset \R_\plus$, by using the cone
$t^0 \Sigma_{1,2k} + \cdots + t^{d-2 k} \Sigma_{1,2k}$, which is of
semidefinite extension degree $k+1$. The key technical ingredient
in our proof is Jacobi's bialternant formula for Schur polynomials from the theory of symmetric polynomials. 

Our result about non-negativity of univariate sparse polynomials
should help to understand the impact of sparsity-based
approaches to optimization problems of arbitrary dimension. Univariate
problems may, on the one hand, demonstrate phenomena that
occur in every dimension. On the other,
whenever some limitation can be identified in the one-dimensional
case, it will also be present in a certain form in an $n$-dimensional
setting as well. Furthermore, our main result can serve as a starting
point for further studies in
several directions. It would obviously be interesting to obtain
similar results in the multivariate case. While our $K$ is
given as the closed convex hull of a monomial curve, one may ask for bounds
on the semidefinite extension degree of closed convex hulls $K$ of
arbitrary semialgebraic curves $S \subset \R^n$. For $n=2$, it was
proved in \cite{sch:sxdeg} that every closed convex \sa\ set $K$ in
the plane is second-order cone representable, i.e.\ has
$\sxdeg(K)\le2$.
It can be shown that the bound $\sxdeg(K) \le
\left\lfloor \frac{n}{2} \right\rfloor  + 1$ holds
for the closed convex hull of an arbitrary semialgebraic curve
$S \subseteq\R^n$, but the proof is much more involved
\cite{sch:gencase}.

The paper is organized as follows. After a few preliminaries in
Section~\ref{prelim}, the main result is obtained in
Section~\ref{conv:hull:curves}. In Section~\ref{sect:sparse} we
introduce the cones of sums of copositive fewnomials, which form a
sparse counterpart of the cones of non-negative polynomials. We
show that in the one-dimensional case these cones admit semidefinite
descriptions with LMIs of small size, and explain that such
descriptions lead to variations of Lasserre's relaxations that are
based on LMIs of small size.


\section{Preliminaries} \label{prelim}

\begin{lab}
Let $\Z_+ = \{0,1,2,\ldots\}$ and $\N = \{1,2,3,\ldots\}$ and
$\R_+ = [0,\infty[$. The
cardinality of a set $A$ is denoted by $|A|$. For a tuple
$\alpha  = (\alpha_1,\dots,\alpha_n)\in \Z^n$ we use the notation
$|\alpha| := \alpha_1 + \cdots + \alpha_n$. For univariate polynomials
we mostly use $t$ to denote the variable, and we write
$\R[t]_d = \setcond{f \in \R[t] }{ \deg(f) \le d }$.
The \emph{support} of a polynomial $p = \sum_{\alpha} c_\alpha x^\alpha
\in \R[x_1,\ldots,x_n]$ is the set
$\supp (p) := \setcond{ \alpha \in \Z_+^n}{c_\alpha \ne 0}$.
By $\sfS^m$ we denote the space of real symmetric $m\times m$ matrices,
and $\sfS_\plus^m$ is the cone of positive semidefinite matrices in
$\sfS^m$. The linear span of a subset $M$ of a vector space is denoted
$\lin(M)$.
\end{lab}

\begin{lab}\label{faces}%
Let $C\subset\R^n$ be a convex cone. The \emph{dual cone} of $C$ is
$C^*=\{y\in\R^n\colon\all x\in C$ $\bil xy\ge0\}$, where $\bil xy$
denotes the standard inner product on $\R^n$. The bi-dual
$C^{**}:=(C^*)^*$ of $C$ equals the closure of $C$, i.e.\
$C^{**}=\ol C$. The cone $C$ is \emph{pointed} if $C\cap(-C)=\{0\}$.
When $C\subset V$ is a convex cone in an arbitrary real vector space
$V$, the dual cone is $C^\ast := \setcond{ y \in V^\vee}
{\forall x \in C \ y(x) \ge 0} $, where  $V^\vee$ is the dual vector
space of~$V$.

A \emph{face} of $C$ is a convex cone $F$ with $F\subset C$ such that
$x,\,y\in C$ and $x+y\in F$ imply $x,\,y\in F$.
For any $x\in C$ there is a unique inclusion-minimal face $F$ of $C$
with $x\in F$, called the \emph{supporting face} of~$x$ in~$C$.
One-dimensional faces of convex cones are called \emph{extreme rays}.
It is well-known that a finite-dimensional closed and pointed convex
cone is the Minkowski sum of its extreme rays.
\end{lab}

\begin{lab}
We briefly explain the conic duality behind the approaches in polynomial optimization by providing a generic version of the discussion in \cite{las} Ch.~10. We are given a subset $S$ of $\R^n$ and polynomials $p,q$ in a finite-dimensional vector subspace $V$ of $\R[x_1,\ldots,x_n]$ such that $q>0$ on $S$.
A common choice for $q$ is the constant  $q=1$. The problem of minimizing the quotient $p/q$ over $S$ can be relaxed (i.e.,  lower bounded) by making use of a closed convex cone $C \subseteq V$ that satisfies $g \ge 0$ on $S$ for every $g \in C$. One has
\[
\inf_S \frac{p}{q} \ge \sup \setcond{ \lambda \in \R}{ p - \lambda q \in C },
\]
where the supremum is a conic optimization problem dual to the problem
\[
\inf \setcond{ v(p) }{  v \in C^\ast, \ v(q) =1}.
\]
In the following proposition, the mentioned duality is phrased without any reference to polynomial by identifying $V$ and its dual space $V^\vee$ with $\R^N$, where $N := \dim(V)$.
\end{lab}

\begin{prop}\label{conic:opt:duality}
	Let $N$ be a positive integer and $C \subseteq \R^N$ be a closed and pointed convex cone. Then for every $p \in C$ and $q \in C \setminus \{ 0\}$ one has
	\begin{equation} \label{conicduality}%
		\sup \setcond{\lambda \in \R}{p - \lambda q \in C} = \inf \setcond{ \sprod{p}{v} }{v \in C^\ast, \ \sprod{v}{q} = 1}.
	\end{equation}
\end{prop}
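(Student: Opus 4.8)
The plan is to read \eqref{conicduality} as a case of conic linear programming duality and to prove the two inequalities between $\lambda^\ast:=\sup\setcond{\lambda\in\R}{p-\lambda q\in C}$ and $\mu:=\inf\setcond{\sprod pv}{v\in C^\ast,\ \sprod vq=1}$ separately.

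First I would settle weak duality, $\lambda^\ast\le\mu$. If $p-\lambda q\in C$ and $v\in C^\ast$ satisfies $\sprod vq=1$, then $\sprod pv-\lambda=\sprod{p-\lambda q}v\ge0$ by the definition of $C^\ast$ (using symmetry of the inner product to identify $\sprod qv$ with $\sprod vq=1$), so $\sprod pv\ge\lambda$; taking the supremum over feasible $\lambda$ and the infimum over feasible $v$ gives the inequality. Before the reverse inequality I would record two facts. Since $p\in C$, the value $\lambda=0$ is feasible, so $\lambda^\ast\ge0$ and the supremum is over a nonempty set. Moreover $\lambda^\ast<\infty$: if not, there are $\lambda_k\to\infty$ with $p-\lambda_k q\in C$, and dividing by $\lambda_k$ and passing to the limit yields $-q\in C$ because $C$ is a closed cone; together with $q\in C$ this contradicts pointedness. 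This is the one point where both hypotheses on $C$ are used.

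For $\mu\le\lambda^\ast$ I would separate. Fix $\lambda>\lambda^\ast$, so that $p-\lambda q\notin C$; since $C$ is closed we have $C^{\ast\ast}=\ol C=C$ (\ref{faces}), so there is $v_\lambda\in C^\ast$ with $\sprod{p-\lambda q}{v_\lambda}<0$, i.e.\ $\sprod p{v_\lambda}<\lambda\sprod q{v_\lambda}$. Because $p\in C$ forces $\sprod p{v_\lambda}\ge0$ and because $\lambda>\lambda^\ast\ge0$, this already gives $\sprod q{v_\lambda}>0$; hence $w_\lambda:=v_\lambda/\sprod q{v_\lambda}$ lies in $C^\ast$ and satisfies $\sprod{w_\lambda}q=1$, and dividing the previous strict inequality by $\sprod q{v_\lambda}>0$ yields $\sprod p{w_\lambda}<\lambda$. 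Thus $\mu\le\sprod p{w_\lambda}<\lambda$ for every $\lambda>\lambda^\ast$, and letting $\lambda\downarrow\lambda^\ast$ completes the argument; as a by-product this shows the dual feasible set is nonempty, so $\mu$ is a genuine infimum.

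The step I expect to carry the weight is the sign bookkeeping that produces $\sprod q{v_\lambda}>0$, since it is exactly what allows the separating functional to be normalized directly, with no limiting or compactness argument: it relies on $\lambda^\ast\ge0$ (hence $\lambda>0$) together with $p\in C$. Everything else reduces to the closed-cone separation principle, conveniently packaged in the bidual identity $C^{\ast\ast}=C$, while pointedness is needed only to guarantee the finiteness of $\lambda^\ast$.
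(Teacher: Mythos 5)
Your proof is correct. The paper states the result without proof (deferring to general conic duality), but its drafted argument follows the same overall template as yours --- weak duality plus a separation step resting on $C^{\ast\ast}=\ol C=C$ --- while closing the strong-duality gap from the opposite side. The paper's version starts from the dual value $b=\inf\sprod pv$ and verifies primal feasibility of $p-(b-\epsilon)q$ by testing $\sprod{p-bq+\epsilon q}{z}\ge0$ against \emph{every} $z\in C^\ast$; this forces a case distinction between $\sprod qz>0$ and $\sprod qz=0$, the latter handled via an auxiliary primal-feasible point, and it also treats the possibility that no $\lambda$ is primal feasible, since that draft does not exploit $p\in C$. You instead start from the primal value $\lambda^\ast$, separate the infeasible point $p-\lambda q$ (for $\lambda>\lambda^\ast$) from $C$, and normalize the separating functional; your observation that $\sprod q{v_\lambda}>0$ follows automatically from $\sprod p{v_\lambda}\ge0$ and $\lambda>\lambda^\ast\ge0$ is exactly what removes the case split. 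The two proofs also obtain finiteness differently: you get $\lambda^\ast<\infty$ directly from closedness and pointedness (the limit $-q\in C$), while the paper derives the analogous control from nonemptiness of the dual feasible set, again via pointedness. Net effect: your argument is a little cleaner under the stated hypotheses precisely because it uses $p\in C$ early, whereas the paper's is marginally more general; both are complete and correct.
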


This is a special case of the duality of conic optimization problems,
cf.\ the discussion in \cite{ble} 2.1.4. We omit the proof which is
quite straightforward.

\oldversion{
	\begin{proof}
		We include a proof to keep the presentation self-contained. If
		$a\in\R$ and $z\in C^*$ satisfy $x-ay\in C$ and $\bil yz=1$, then
		$\bil xz=\bil{x-ay}z+\bil{ay}z\ge\bil{ay}z=a$. So ``$\le$'' is
		obvious in \eqref{conicduality}, and it remains to show ``$\ge$''.
		
		There exists $z\in C^*$ with $\bil yz=1$, since otherwise $\bil yz=0$
		would hold for every $z\in C^*$. This would imply
		$\R y\subset C^{**}=C$, contradicting the assumption that $C$ is
		pointed. So the infimum on the right is $<\infty$.
		
		If $(x+\R y)\cap C=\emptyset$, the supremum on the left of
		\eqref{conicduality} is $-\infty$. Let $a\in\R$ be given, we show
		that the right hand infimum is $<a$. Since $x-ay\notin C$, there
		exists $z_a\in C^*$ with $\bil{x-ay}{z_a}<0$. If $\bil y{z_a}>0$, we
		may assume $\bil y{z_a}=1$ and get
		$\bil x{z_a}=\bil{x-ay}{z_a}+\bil{ay}{z_a}<a$. If $\bil y{z_a}=0$,
		choose $w\in C^*$ with $\bil yw=1$.
		Since $\bil x{z_a}<0$, there exists a real number $\lambda>0$ such
		that $\lambda\bil x{z_a}+\bil xw<a$. Now $z:=\lambda z_a+w\in C^*$
		satisfies $\bil yz=1$, and $\bil xz<a$ by the choice of $\lambda$.
		
		It remains to consider the case where $\inf\{\bil xz\colon z\in C^*$,
		$\bil yz=1\}=b\in\R$. By the previous argument, there exists $a\in\R$
		with $x-ay\in C$. For any real number $\epsilon>0$ we have to show
		that $x-by+\epsilon y\in C$, or equivalently, that
		$\bil{x-by+\epsilon y}{z}\ge0$ for every $z\in C^*$. If $\bil yz>0$
		holds, we may assume $\bil yz=1$, and then $\bil{x-by+\epsilon y}z=
		\bil xz+(\epsilon-b)\ge b+(\epsilon-b)>0$.
		On the other hand, if $\bil yz=0$ then
		$\bil{x-by+\epsilon y}z=\bil xz=\bil{x-ay}z\ge0$.
	\end{proof}
}


\section{Convex hulls of monomial curves} \label{conv:hull:curves}

Let $I\subset\R$ be a non-degenerate closed interval, let
$V\subset\R[t]$ be a linear subspace of finite dimension,
and let $P=\{f\in V\colon f\ge0$ on $I\}$, a closed and pointed
convex cone in $V$.
We start by showing that every face of the cone $P$ is described by
suitable vanishing conditions at points of $I$, or at infinity when
$I$ is unbounded. Let $\ord_s(f)$ denote the order of vanishing of
$f\in\R[t]$ at $s\in\R$.

\begin{prop}\label{extstrahl}%
For $0\ne f\in P$ put $W_f=\{g\in V\colon\all s\in I$
$\ord_s(g)\ge\ord_s(f)\}$. Let $U_f=\{g\in W_f\colon\deg(g)\le
\deg(f)\}$ if $I$ is unbounded, and put $U_f=W_f$ if $I$ is
compact. Then $U_f$ is the linear span of the supporting face
$F_f$ of $f$. In particular, $\dim(F_f)=\dim(U_f)$.
\end{prop}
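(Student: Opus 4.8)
The plan is to prove the two inclusions $\lin(F_f)\subseteq U_f$ and $U_f\subseteq\lin(F_f)$ separately. Note first that $U_f$ really is a linear subspace of $V$: the conditions $\ord_s(g)\ge\ord_s(f)$ are nontrivial only at the finitely many zeros of $f$ in $I$ and are linear, as is the degree bound $\deg(g)\le\deg(f)$. The governing principle for both inclusions is the standard fact that $f$ lies in the relative interior of its supporting face, $f\in\relint(F_f)$, which is automatic for the inclusion-minimal face containing a point.

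For $\lin(F_f)\subseteq U_f$ I would take $g\in\lin(F_f)$ and use $f\in\relint(F_f)$ to find $\delta>0$ with $f\pm\delta g\in F_f\subseteq P$, so that $f+\delta g\ge0$ and $f-\delta g\ge0$ on $I$. Fix a zero $s\in I$ of $f$ of order $m=\ord_s(f)$ and suppose $k:=\ord_s(g)<m$; then near $s$ the functions $f\pm\delta g$ have leading term $\pm\delta b(t-s)^k$ with $b\ne0$, and requiring both signs to be $\ge0$ (on both sides of $s$ if $s$ is interior, on the relevant side if $s$ is an endpoint) forces $b=0$, a contradiction. Hence $g\in W_f$. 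In the unbounded case the same sign bookkeeping at infinity forces $\deg(g)\le\deg(f)$, since a strictly larger degree would make one of $f\pm\delta g$ tend to $-\infty$. Thus $g\in U_f$.

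The reverse inclusion $U_f\subseteq\lin(F_f)$ is the substantive direction. Given $g\in U_f$, the goal is to produce a single $\epsilon>0$ with $f\pm\epsilon g\in P$; granting this, $f+\epsilon g$ and $f-\epsilon g$ lie in $P$ with sum $2f\in F_f$, so the defining face property of $F_f$ places both in $F_f$, whence $g=\frac{1}{2\epsilon}\bigl((f+\epsilon g)-(f-\epsilon g)\bigr)\in\lin(F_f)$. To produce $\epsilon$ I would argue locally and patch: on any compact part of $I$ away from the zeros of $f$, the function $f$ is bounded below by a positive constant while $g$ is bounded, so small $\epsilon$ suffices; near a zero $s$ of order $m$, writing $f=(t-s)^m u$ and $g=(t-s)^m v$ (possible since $g\in W_f$) gives $f\pm\epsilon g=(t-s)^m(u\pm\epsilon v)$ with $u(s)>0$, so $u\pm\epsilon v>0$ near $s$ for small $\epsilon$ while $(t-s)^m$ has the correct sign on the relevant side of $s$ (using that $m$ is even at interior zeros).

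The main obstacle, and the place where the hypotheses are genuinely used, is the unbounded case at infinity together with the assembly of the local estimates into one uniform $\epsilon$. For $I$ unbounded the bound $\deg(g)\le\deg(f)$ is exactly what guarantees $f\pm\epsilon g\ge0$ for large $|t|$: if $\deg(g)<\deg(f)$ then $g/f\to0$, while if $\deg(g)=\deg(f)$ the leading coefficient of $f\pm\epsilon g$ is $\ell_f\pm\epsilon\ell_g$, which is positive for small $\epsilon$ because $\ell_f>0$. Since $f$ has only finitely many zeros in $I$, finitely many neighborhoods of zeros together with one tail region at each unbounded end cover $I$, and taking the minimum of the finitely many admissible $\epsilon$'s yields a single $\epsilon$ with $f\pm\epsilon g\in P$. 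Finally $\dim(F_f)=\dim(\lin(F_f))=\dim(U_f)$, since the dimension of a convex cone equals that of its linear span.
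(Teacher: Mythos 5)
Your proof is correct and follows essentially the same route as the paper: the substantive inclusion $U_f\subseteq\lin(F_f)$ is obtained in both cases by dominating $|g|$ by a small multiple of $f$ locally near each zero of $f$, on the complementary compact part, and on the unbounded tails (where the degree bound enters), then patching by a finite covering and invoking the face property of $F_f$. The only (harmless) difference is in the easy inclusion, where you perturb inside $F_f$ using $f\in\relint(F_f)$, while the paper reads off $\ord_s(p)\ge\ord_s(f)$ and the degree bound directly from any decomposition $f=p+q$ with $p,q\in P$.
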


For extreme rays of $P$, this implies:

\begin{cor}\label{xrayrootscpt}%
Assume that $I$ is compact, let $f\in V$ span an extreme ray of $P$.
Then $f$ has at least $\dim(V)-1$ roots in $I$, counting with
multiplicity.
\end{cor}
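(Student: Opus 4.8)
The plan is to read off the lower bound on the number of roots directly from the dimension of the space $W_f$ furnished by Proposition~\ref{extstrahl}. Since $f$ spans an extreme ray of $P$, its supporting face $F_f$ is precisely that ray, $F_f=\R_\plus f$, so $\dim(F_f)=1$. Because $I$ is compact we have $U_f=W_f$, and Proposition~\ref{extstrahl} identifies $U_f$ with $\lin(F_f)$; hence $\dim(W_f)=\dim(F_f)=1$.

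Next I would realize $W_f$ explicitly as the solution space of a linear system. Let $s_1,\dots,s_r\in I$ be the distinct roots of $f$ in $I$, with multiplicities $\mu_j=\ord_{s_j}(f)$, and set $N=\sum_{j=1}^r\mu_j$, the number of roots of $f$ in $I$ counted with multiplicity (finite, as $f\ne0$). At any point $s$ with $f(s)\ne0$ we have $\ord_s(f)=0$, so the defining condition $\ord_s(g)\ge\ord_s(f)$ is vacuous there; thus $W_f=\setcond{g\in V}{\ord_{s_j}(g)\ge\mu_j\text{ for }1\le j\le r}$. For each fixed $j$ the requirement $\ord_{s_j}(g)\ge\mu_j$ is equivalent to $g(s_j)=g'(s_j)=\cdots=g^{(\mu_j-1)}(s_j)=0$, i.e.\ to $\mu_j$ linear equations on $g\in V$. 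Consequently $W_f$ is cut out inside $V$ by a system of $N$ linear equations.

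It follows that $\codim_V(W_f)\le N$, that is, $\dim(W_f)\ge\dim(V)-N$. Combining this with $\dim(W_f)=1$ yields $N\ge\dim(V)-1$, which is the assertion. The deduction is essentially immediate once Proposition~\ref{extstrahl} is in hand; I do not expect a genuine obstacle here. The only point meriting care is the elementary bookkeeping that a vanishing condition $\ord_{s_j}(g)\ge\mu_j$ contributes exactly $\mu_j$ linear constraints, so that the codimension of $W_f$ is bounded by $N$ rather than by the mere number $r$ of distinct roots. Note also that compactness of $I$ enters precisely through the equality $U_f=W_f$, which removes the degree restriction distinguishing $U_f$ from $W_f$ in the unbounded case.
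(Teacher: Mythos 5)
Your proposal is correct and follows essentially the same route as the paper: both deduce $W_f=\R f$ from Proposition~\ref{extstrahl} (using $U_f=W_f$ in the compact case) and then bound the number of roots below by $\codim_V(W_f)=\dim(V)-1$. You merely spell out more explicitly the bookkeeping step that each condition $\ord_{s_j}(g)\ge\mu_j$ contributes $\mu_j$ linear constraints, which the paper leaves implicit.
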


\begin{proof}
By assumption we have $F_f=\R_\plus f$, so $W_f=\R f$ by
Proposition \ref{extstrahl}. Since $W_f$ consists of the elements in
$V$ with at least the same roots in $I$ as $f$, there have to be at
least $\codim(W_f)=\dim(V)-1$ many roots of $f$ in $I$, counting with
multiplicities.
\end{proof}

In the non-compact case, the result reads as follows:

\begin{cor}\label{xrayrootsnoncpt}%
Let $I$ be unbounded, let $f\in P$ span an extreme ray of $P$.
Write $d=\deg(f)$ and $V_d:=\{g\in V\colon\deg(g)\le d\}$. Then $f$
has at least $\dim(V_d)-1$ roots in $I$, counting with multiplicity.
\end{cor}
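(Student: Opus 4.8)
The plan is to mimic the argument of Corollary~\ref{xrayrootscpt}, but working inside the finite-dimensional space $V_d$ rather than $V$, and invoking the unbounded-case description of the supporting face from Proposition~\ref{extstrahl}. First I would note that, since $f$ spans an extreme ray, its supporting face is $F_f=\R_\plus f$, whose linear span is the line $\R f$. By Proposition~\ref{extstrahl} (the case where $I$ is unbounded) we then have $U_f=\lin(F_f)=\R f$, so $\dim(U_f)=1$.

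Next I would unwind the definition of $U_f$ in the unbounded case. With $d=\deg(f)$ one has $U_f=\{g\in W_f\colon\deg(g)\le d\}=W_f\cap V_d$, which is exactly the subspace of those $g\in V_d$ satisfying $\ord_s(g)\ge\ord_s(f)$ for every $s\in I$. The key observation is that each root $s\in I$ of $f$, of multiplicity $k_s:=\ord_s(f)$, contributes the $k_s$ linear conditions $g(s)=g'(s)=\cdots=g^{(k_s-1)}(s)=0$ on $g\in V_d$. Thus $U_f$ is cut out from $V_d$ by $R:=\sum_{s\in I}\ord_s(f)$ linear conditions, where $R$ is precisely the number of roots of $f$ in $I$ counted with multiplicity.

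Finally I would run the dimension count: imposing $R$ linear conditions lowers the dimension by at most $R$, so $\dim(U_f)\ge\dim(V_d)-R$, the codimension bound being valid because $V_d$ is finite-dimensional (it is a subspace of the finite-dimensional space $V$). Combining this with $\dim(U_f)=1$ yields $R\ge\dim(V_d)-1$, which is the asserted lower bound on the number of roots.

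There is no serious obstacle here, as the statement is essentially a transcription of the compact case. The only point that needs care is to use the unbounded-case definition $U_f=W_f\cap V_d$ rather than all of $W_f$, so that the vanishing conditions are counted inside the finite-dimensional space $V_d$ where the codimension estimate applies; passing to $V_d$ is also what replaces $\dim(V)$ by $\dim(V_d)$ in the final bound.
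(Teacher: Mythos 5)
Your proposal is correct and follows essentially the same route as the paper: the paper's proof of Corollary~\ref{xrayrootsnoncpt} is exactly the observation that, by Proposition~\ref{extstrahl}, the line $\R f$ consists of the elements of $V_d$ with at least the same roots in $I$ as $f$, so the codimension count inside $V_d$ forces at least $\dim(V_d)-1$ roots. Your explicit unwinding of $U_f=W_f\cap V_d$ and the count of the linear vanishing conditions is just a more detailed write-up of the same argument.
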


\begin{proof}
Same argument as for Corollary \ref{xrayrootscpt}, since $\R f$
consists of the elements in $V_d$ with at least the same roots in $I$
as~$f$.
\end{proof}

\begin{proof}[Proof of Proposition \ref{extstrahl}]
The supporting face of $f$ is $F_f=\{p\in P\colon\ex\gamma>0$ with
$f-\gamma p\in P\}$.
Clearly, if $p,\,q\in P$ with $f=p+q$ then $p,\,q\in W_f$, and also
$\deg(p)$, $\deg(q)\le\deg(f)$ when $I$ is unbounded. Therefore
$F_f\subset U_f$ holds. To prove $U_f\subset\lin(F_f)$, note that for
every $g\in U_f$ there exists a constant $c>0$ such that $c|g|\le f$
on $I$. Indeed, for every $s\in I$ there is a constant $c_s>0$ with
$c_s|g(t)|\le f(t)$ on some neighborhood $J_s$ of $s$. If $I$
contains a right half-line, there are $a,\,c>0$ with
$c|g(t)|\le f(t)$ for all $t\in J_\infty=\left[a,\infty\right[$,
and similarly in the case of a left half-line. By passing to a
finite subcovering of $I$ we find a constant $c$ as required.
	
By the preceding remark we find a linear basis $g_1,\dots,g_r$ of
$U_f$ with the property that $|g_i|\le f$ on $I$ for $i=1,\dots,r$.
It follows that $f-g_i\in F_f$ ($i=1,\dots,r$),
and so $U_f=\lin(f-g_1,\dots,f-g_r,\,f)$ is contained in
$\lin(F_f)$.
\end{proof}

We continue to assume that $V\subset\R[t]$ is a linear subspace of
finite dimension $\dim(V)=n+1$. Assume we are given $n$ vanishing
conditions for elements of $V$ at points on the real line, where we
allow conditions of higher order vanishing. If these conditions are
independent on $V$, the following lemma gives an explicit formula for
the essentially unique solution of these conditions in $V$:

\begin{lem}\label{simplema}%
Let $p_0,\dots,p_n$ be a linear basis of $V$. Let
$\xi=(\xi_1,\dots,\xi_r)$ be a tuple of pairwise different real
numbers, and let $b_1,\dots,b_r\ge1$ be integers with
$\sum_{i=1}^rb_i=n$. The following conditions are equivalent:
\begin{itemize}
\item[(i)]
The matrix
\begin{equation}\label{bigmatbieq2}%
A(t;\xi)\ =\ \begin{pmatrix}p_0(t)&\cdots&p_n(t)\\
p_0(\xi_1)&\cdots&p_n(\xi_1)\\
\vdots&&\vdots\\
p_0^{(b_1-1)}(\xi_1)&\cdots&p_n^{(b_1-1)}(\xi_1)\\
\vdots&&\vdots\\
p_0(\xi_r)&\cdots&p_n(\xi_r)\\
\vdots&&\vdots\\
p_0^{(b_r-1)}(\xi_r)&\cdots&p_n^{(b_r-1)}(\xi_r)
\end{pmatrix}
\end{equation}
of size $(n+1)\times(n+1)$ and with entries in $\R[t]$ has
non-zero determinant;
\item[(ii)]
the subspace $\{f\in V\colon\ord_{\xi_i}(f)\ge b_i$ for
$i=1,\dots,r\}$ of $V$ has dimension one.
\end{itemize}
When (i) and (ii) hold, the polynomial $\det A(t;\xi)$ is the unique
(up to scaling) element of $V$ that vanishes in $\xi_i$ of order at
least $b_i$, for $i=1,\dots,r$.
\end{lem}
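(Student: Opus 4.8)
The plan is to reinterpret both conditions through the single linear evaluation map
\[
\Phi\colon V\to\R^n,\quad f\mapsto\bigl(f(\xi_1),f'(\xi_1),\dots,f^{(b_1-1)}(\xi_1),\dots,f(\xi_r),\dots,f^{(b_r-1)}(\xi_r)\bigr),
\]
whose $n$ output coordinates record exactly the vanishing data appearing in the lower $n$ rows of $A(t;\xi)$. By definition $\ker\Phi=\{f\in V\colon\ord_{\xi_i}(f)\ge b_i\text{ for all }i\}$, so condition (ii) says precisely that $\dim\ker\Phi=1$. Writing $B$ for the $n\times(n+1)$ matrix of $\Phi$ in the basis $p_0,\dots,p_n$ --- which is the block of $A(t;\xi)$ below the first row --- the rank--nullity theorem gives $\dim\ker\Phi=(n+1)-\rk(B)$, and since the target is $\R^n$ we always have $\rk(B)\le n$. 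Hence (ii) is equivalent to $\rk(B)=n$, i.e.\ to $B$ having a nonzero maximal minor.

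First I would expand $\det A(t;\xi)$ along its first row, obtaining $\det A(t;\xi)=\sum_{j=0}^n(-1)^jM_j\,p_j(t)$, where $M_j$ is the maximal minor of $B$ obtained by deleting its $j$-th column. Since $p_0,\dots,p_n$ are linearly independent, this polynomial is identically zero if and only if every coefficient $M_j$ vanishes, i.e.\ iff $\rk(B)<n$. This is exactly the negation of (ii), which establishes the equivalence (i)$\iff$(ii); the same expansion also shows $\det A(t;\xi)\in V$, since it is a constant-coefficient combination of the $p_j$.

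For the final uniqueness assertion, assume (i) and (ii) hold, so $\det A(t;\xi)\ne0$ and $\dim\ker\Phi=1$. It then suffices to check that $\det A(t;\xi)\in\ker\Phi$, for then this nonzero polynomial must span the one-dimensional kernel, yielding both the vanishing property and uniqueness up to scaling. To verify membership I would differentiate $\det A(t;\xi)$ in $t$: as only the first row depends on $t$, for each $\ell$ we get $\frac{d^\ell}{dt^\ell}\det A(t;\xi)=\sum_j(-1)^jM_j\,p_j^{(\ell)}(t)$, which is the determinant of the matrix obtained from $A(t;\xi)$ by replacing its first row with $(p_0^{(\ell)}(t),\dots,p_n^{(\ell)}(t))$. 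Evaluating at $t=\xi_i$ for $0\le\ell\le b_i-1$, this new first row coincides with one of the rows already present in the $\xi_i$-block, so the determinant has two equal rows and vanishes; hence $\ord_{\xi_i}(\det A(t;\xi))\ge b_i$ for every $i$, as required. The only real subtlety is this repeated-row bookkeeping --- making sure that the row $(p_0^{(\ell)}(\xi_i),\dots,p_n^{(\ell)}(\xi_i))$ is literally one of the rows of $B$ in precisely the range $0\le\ell\le b_i-1$ --- whereas the equivalence of the two conditions reduces to a direct rank computation.
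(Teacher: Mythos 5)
Your proposal is correct and follows essentially the same route as the paper: identify the subspace in (ii) with the kernel of the evaluation map whose matrix is the lower block $B$, use the cofactor expansion along the first row to see that $\det A(t;\xi)\ne0$ iff $B$ has a nonvanishing maximal minor iff the kernel is one-dimensional, and conclude uniqueness from the fact that $\det A(t;\xi)$ itself lies in that kernel. The only difference is that you spell out (via differentiating the determinant and spotting repeated rows) the vanishing-order claim that the paper simply declares ``clear,'' which is a fine and standard justification.
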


\begin{proof}
Note that the dimension in (ii) is always $\ge1$.
The matrix $B$ formed by the lower $n$ rows of \eqref{bigmatbieq2} is
the matrix of the linear map $\phi\colon V\to\R^n$,
$$p\ \mapsto\ \Bigl(p(\xi_1),\dots,p^{(b_1-1)}(\xi_1),\dots,
p(\xi_r),\dots,p^{(b_r-1)}(\xi_r)\Bigr)$$
with respect to the basis $p_0,\dots,p_n$ of $V$. Subspace (ii) is
just the kernel of $\phi$. The determinant of \eqref{bigmatbieq2} is
an element of $V$, and is non-zero if and only if $B$ has a
non-vanishing $n\times n$-minor.
This is equivalent to $\phi$ being surjective, and hence also to
(ii). The last assertion is clear since $\det A(t;\xi)$ is an
element of $V$ that has a zero of multiplicity $\ge b_i$ at $\xi_i$,
for $i=1,\dots,r$.
\end{proof}

\begin{lab}\label{schurpoly}%
Next we recall some background on Schur polynomials.
Let $n\in\N$ be a fixed integer and let $x=(x_0,\dots,x_n)$ be a
tuple of indeterminates. We consider partitions
$\mu=(m_0,\dots,m_n)$ into $n+1$ pieces, with
$m_0\ge\cdots\ge m_n\ge0$.
A particular partition is $\delta=(n,n-1,\dots,1,0)$.
Given a partition $\mu$ as above, the determinant
$$F_\mu(x)\>:=\>\det\begin{pmatrix}x_0^{m_0}&\cdots&x_0^{m_n}\\
\vdots&&\vdots\\x_n^{m_0}&\cdots&x_n^{m_n}\end{pmatrix}$$
is identically zero unless the $m_i$ are pairwise distinct. In this
latter case,
$\lambda=\mu-\delta=\bigl(m_0-n,\dots,m_{n-1}-1,m_n\bigr)$ is
another partition.
Clearly, the Vandermonde product
$$v(x)\>:=\>\prod_{0\le i<j\le n}(x_i-x_j)\>=\>F_\delta(x)$$
divides $F_\mu(x)$.
The co-factor is the \emph{Schur polynomial}
$s_\lambda(x)$ of the partition $\lambda$. In other words, the
\emph{bialternant formula}
\begin{equation}\label{bialtfml}%
F_\mu(x)\>=\>v(x)\cdot s_\lambda(x)
\end{equation}
holds. Depending on how Schur polynomials are introduced, identity
\eqref{bialtfml} is either the definition of $s_\lambda(x)$ or a
theorem, see \cite{st} Theorem 7.15.1.

The Schur polynomial $s_\lambda(x)$ is symmetric as a polynomial in
the variables $x_i$, homogeneous of degree $|\lambda|$, and of degree
$\lambda_0$ with respect to each variable $x_i$.
Schur polynomials have the remarkable property that all their
coefficients are non-negative integers. In fact there exists a
combinatorial description of the coefficients, see Section 7.10 in
\cite{st}. In our context, the integrality of the coefficients plays no role, but the non-negativity is a crucial property. 
\end{lab}

\begin{lab}\label{bigmatbi}%
We use Schur polynomials to deduce a product formula for the
determinant in Lemma \ref{simplema}, in the case where the $p_i$
are monomials. Let $\mu=(m_0,\dots,m_n)$ be a partition into
different parts, i.e.\ with $m_0>\cdots>m_n\ge0$. Write
$p_i(t)=t^{m_i}$ for $i=0,\dots,n$, and denote the $j$-th derivative
of $p_i(t)$ by $p_i^{(j)}(t):=\frac{d^j}{dt^j}p(t)$ ($j\ge0$).
Moreover we introduce the tuples
$p(t)=\bigl(p_0(t),\dots,p_n(t)\bigr)$ and
$p^{(j)}(t)=\bigl(p_0^{(j)}(t),\dots,p_n^{(j)}(t)\bigr)$ for $j\ge0$.

Let $b=(b_0,\dots,b_r)$ be a tuple of integers $b_i\ge1$ such that
$\sum_ib_i=n+1$, let $y=(y_0,\dots,y_r)$ be a tuple of $r+1$
variables. We consider the determinant $F_{\mu,b}(y)$ of size $n+1$
that contains, for each $i=0,\dots,r$, the rows
$$p^{(j)}(y_i)\>=\>\bigl(p_0^{(j)}(y_i),\dots,p_0^{(j)}(y_i)\bigr)$$
for $j=0,1,\dots,b_i-1$. In other words, let
\begin{equation}\label{bigmatbieq}%
F_{\mu,b}(y)\>:=\>\det\begin{pmatrix}p_0(y_0)&\cdots&p_n(y_0)\\
p_0'(y_0)&\cdots&p_n'(y_0)\\
\vdots&&\vdots\\
p_0^{(b_0-1)}(y_0)&\cdots&p_n^{(b_0-1)}(y_0)\\
\vdots&&\vdots\\
p_0(y_r)&\cdots&p_n(y_r)\\
\vdots&&\vdots\\
p_0^{(b_r-1)}(y_r)&\cdots&p_n^{(b_r-1)}(y_r)\\
\end{pmatrix}
\end{equation}
and let again $\lambda=\mu-\delta$.
\end{lab}

\begin{prop}\label{bialtmultroots}%
The determinant $F_{\mu,b}(y)$ has the product decomposition
$$F_{\mu,b}(y)\>=\>c\cdot v_b(y)\cdot s_{\lambda,b}(y)$$
where
$$v_b(y)\>:=\>\prod_{0\le i<j\le r}(y_i-y_j)^{b_ib_j}$$
and
$$s_{\lambda,b}(y)\>:=\>
s_\lambda\bigl(\underbrace{y_0,\dots,y_0}_{b_0},\>\dots,\>
\underbrace{y_r,\dots,y_r}_{b_r}\bigr)$$
and $c$ is a constant, equal to
$$c\>=\>\prod_{i=0}^r(-1)^{b_i(b_i-1)/2}\cdot(b_i-1)!$$
\end{prop}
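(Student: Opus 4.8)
The plan is to obtain the formula by \emph{confluence}, realizing $F_{\mu,b}(y)$ as the limit of the ordinary bialternant $F_\mu(x)$ when the $n+1$ variables $x_0,\dots,x_n$ collide in $r+1$ groups, the $i$-th group (of size $b_i$) tending to $y_i$. Concretely, I would relabel the variables as $x_{(i,k)}$ with $0\le i\le r$ and $0\le k\le b_i-1$, ordered lexicographically, so that the rows of the matrix $\bigl(x_{(i,k)}^{m_j}\bigr)$ defining $F_\mu(x)$ appear in exactly the group order used in \eqref{bigmatbieq}. The bialternant formula \eqref{bialtfml} then reads $F_\mu(x)=v(x)\,s_\lambda(x)$, and the decisive structural fact is that $s_\lambda$ is an honest \emph{polynomial}; hence it specializes without trouble, $s_\lambda(x)\to s_{\lambda,b}(y)$, as $x_{(i,k)}\to y_i$. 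This is precisely what makes the limit finite and clean.

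The heart of the argument is a row reduction inside each group by Newton divided differences. For fixed $i$, replacing the rows $\bigl(p_j(x_{(i,k)})\bigr)_j$, $k=0,\dots,b_i-1$, by the divided-difference rows $\bigl([x_{(i,0)},\dots,x_{(i,k)}]\,p_j\bigr)_j$ amounts to left multiplication by a lower triangular matrix $T_i$, whose determinant is the product of its diagonal entries, $\det T_i=\prod_{0\le m<k\le b_i-1}(x_{(i,k)}-x_{(i,m)})^{-1}$. Carrying this out simultaneously in all groups gives
\[
\Bigl(\textstyle\prod_{i=0}^r\det T_i\Bigr)\,F_\mu(x)\;=\;\det(\text{divided-difference matrix}).
\]
I would then let $x_{(i,k)}\to y_i$ for all $i,k$. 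On the right, the standard limit $[x_{(i,0)},\dots,x_{(i,k)}]\,p_j\to\tfrac1{k!}\,p_j^{(k)}(y_i)$ turns the divided-difference matrix into the matrix of \eqref{bigmatbieq} with group-$i$ row $k$ scaled by $1/k!$; pulling out these scalars yields $\bigl(\prod_i\prod_{k=0}^{b_i-1}\tfrac1{k!}\bigr)F_{\mu,b}(y)$. On the left, I split $v(x)$ into within-group and between-group factors: the within-group factors cancel against $\prod_i\det T_i$ up to the sign $(-1)^{b_i(b_i-1)/2}$ per group, coming from reversing the order of each difference, while the between-group factors $\prod_{i<i'}\prod_{k,k'}(x_{(i,k)}-x_{(i',k')})$ tend to $\prod_{i<i'}(y_i-y_{i'})^{b_ib_{i'}}=v_b(y)$. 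Combining both sides and moving the factorials over produces the asserted identity $F_{\mu,b}(y)=c\,v_b(y)\,s_{\lambda,b}(y)$.

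Everything except the constant is formal: $\det T_i$ is immediate from the triangular shape of $T_i$; the interchange of limit and determinant is legitimate because the divided differences of the $p_j$ are themselves polynomials in the nodes, so that the limit is mere evaluation on the diagonal; and the factorization of $v(x)$ is purely combinatorial. Accordingly, I expect the main obstacle to be exactly the bookkeeping of the multiplicative constant $c$: one must collect correctly the within-group Vandermonde sign $(-1)^{b_i(b_i-1)/2}$ together with the factorial factors produced by the confluence limits $[x_{(i,0)},\dots,x_{(i,k)}]p_j\to\tfrac1{k!}p_j^{(k)}(y_i)$, and verify that the resulting factor for each group matches the constant in the statement.
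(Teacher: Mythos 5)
Your confluence argument is essentially the paper's own proof: the paper performs the same row reduction, only sequentially (replacing $p(x_k)$ by a Taylor-remainder quotient and specializing one variable at a time) instead of packaging each group into a single triangular divided-difference matrix, and both arguments rest on the same three ingredients --- the bialternant formula, the split of $v(x)$ into within-group and between-group factors, and the polynomiality of $s_\lambda$, which makes the specialization harmless. One point in your favour: carried out, your bookkeeping yields the group factor $(-1)^{b_i(b_i-1)/2}\prod_{k=0}^{b_i-1}k!$, which is in fact the correct constant (for $r=0$, $b_0=4$, $\mu=(3,2,1,0)$ the determinant is the Wronskian of $t^3,t^2,t,1$, equal to $0!\,1!\,2!\,3!=12$, not $3!=6$), so the $(b_i-1)!$ in the stated value of $c$ is a slip for $\prod_{k=1}^{b_i-1}k!$ --- harmless downstream, since only $c\neq 0$ is ever used.
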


\begin{proof}
We inductively derive the assertion from the bialternant formula.
Let $A_0$ be the matrix with rows $p(x_0),\dots,p(x_n)$, so
$$\det(A_0)\>=\>s_\lambda(x)\cdot\prod_{0\le i<j\le n}(x_i-x_j)$$
by \eqref{bialtfml}. Replacing the second row $p(x_1)$ by
$(p(x_1)-p(x_0))/(x_1-x_0)$, the determinant gets divided by
$x_1-x_0$. If we now specialize $x_1:=x_0$, the resulting matrix
$A_1$ has second row $p'(x_0)$ and has determinant
$$\det(A_1)\>=\>-s_\lambda(x_0,x_0,x_2,\dots,x_n)\cdot
\prod_{j=2}^n(x_0-x_j)^2\cdot\prod_{2\le i<j\le n}(x_i-x_j)$$
By pulling out $x_1-x_0$, we have therefore used up the factor
$x_0-x_1$ from $v(x)$ and got a minus sign. Now iterate this step.
Next we replace the third row of $A_1$, which is $p(x_2)$, by
$$\frac{p(x_2)-p(x_0)-(x_2-x_0)p'(x_0)}{(x_2-x_0)^2}$$
Specializing $x_2:=x_0$, the resulting matrix $A_2$ has rows
$$p(x_0),\ p'(x_0),\ \frac12p''(x_0),\ p(x_3),\dots$$
and has determinant
$$\det(A_2)\>=\>-s_\lambda(x_0,x_0,x_0,x_3,\dots,x_n)\cdot
\prod_{j=3}^n(x_0-x_j)^3\cdot\prod_{3\le i<j\le n}(x_i-x_j)$$
And so on. After $b_0$ many steps, the rows have become
$$p(x_0),\ p'(x_0),\ \dots,\ \frac1{(b_0-1)!}p^{(b_0-1)}(x_0),\
p(x_{b_0}),\dots,p(x_n),$$
and at that point we have thrown in $0+1+\cdots+(b_0-1)=\binom{b_0}2$
many minus signs.

We can now repeat this procedure. The next step consists in doing
$b_1$ many steps on the variables $x_{b_0},\dots,x_{b_0+b_1-1}$.
And so on. Finally, relabel the variables that have survived as
$y_0,\dots,y_r$.
\end{proof}

\begin{lab}\label{setupmoncase}%
After these preparations we come to the main result of our paper.
Let $\mu=(m_0,\dots,m_n)$ with $m_0>\cdots>m_n\ge0$, write
$k=1+\lfloor\frac n2\rfloor$. Let $\R[t]$ be the ring of polynomials
in the variable~$t$. We consider the subspace $V=V_\mu$ of $\R[t]$
that is spanned by the monomials $t^{m_0},\dots,t^{m_n}$. Let
$I\subset\R$ be an interval, and let
$$P\>:=\>\{f\in V\colon f\ge0\text{ on }I\}.$$
Note that $P$ is a closed convex cone in $V$. The key result is the
following nichtnegativstellensatz:
\end{lab}

\begin{thm}\label{moncasexplicit}%
Let (a) $I=[0,1]$ or (b) $I=\R_\plus=\left[0,\infty\right[$. Then
every $f\in P$ can be written as a finite sum
\begin{align}\label{moncasefml}%
\begin{split}
{\rm(a)}\quad f & =\ \sum_ig_i(t)^2p_i(t)+(1-t)\sum_jh_j(t)^2q_j(t), \\
{\rm(b)}\quad f & =\ \sum_ig_i(t)^2p_i(t),
\end{split}
\end{align}
where $g_i,p_i,h_j,q_j$ are polynomials with
$\deg(g_i),\,\deg(h_j)\le\lfloor\frac n2\rfloor=k-1$, such that the
degree of every summand is $\le m_0$, and such that the coefficients
of $p_i,\,q_j$ are all non-negative.
\end{thm}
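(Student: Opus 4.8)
The plan is to reduce to extreme rays and then read off the representation from the Jacobi bialternant formula. Since $P$ is a closed, pointed, finite-dimensional convex cone, by \ref{faces} it is the Minkowski sum of its extreme rays, and each extreme ray will contribute a single product of degree $\le m_0$; summing these products gives a representation \eqref{moncasefml} whose summands all have degree $\le m_0$ and whose $g_i,h_j$ obey the degree bound because it holds ray by ray. So fix a generator $f$ of an extreme ray of $P$. I would normalize $f$ in two steps: first replace $V$ by the span $V'$ of the monomials occurring in $f$, which only shrinks the cone, so $f$ still spans an extreme ray and the number of monomials drops to $n'+1\le n+1$; second, divide out the highest power $t^{e_0}\mid f$, which multiplies any representation of the quotient by the nonnegative-coefficient monomial $t^{e_0}$ and leaves the $g_i,h_j$ untouched. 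After this, $f$ has full support in $V'$, a nonzero constant term ($f(0)\ne0$), and no root at the endpoint $0$.

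Next I would pin down the roots of $f$ exactly. Corollary \ref{xrayrootscpt} in case (a) and Corollary \ref{xrayrootsnoncpt} in case (b) give at least $\dim V'-1=:N$ roots of $f$ in $I$ counted with multiplicity, while the classical fact that the monomials spanning $V'$ form a Chebyshev system on $(0,\infty)$ bounds the number of positive roots from above by $N$. Hence $f$ has exactly $N$ roots $\xi_1,\dots,\xi_r$ in $(0,1]$ resp.\ $(0,\infty)$, of multiplicities $b_1,\dots,b_r$ with $\sum_i b_i=N$. Because $f\ge0$ on $I$, every $\xi_j$ in the open interval has even multiplicity, while in case (a) the endpoint $t=1$ may carry arbitrary multiplicity. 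By Proposition \ref{extstrahl} these vanishing conditions cut $V'$ down to $\R f$, so they are independent, and Lemma \ref{simplema} identifies $f$, up to a nonzero scalar, with $\det A(t;\xi)$.

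Now I would apply Proposition \ref{bialtmultroots}. Viewing $\det A(t;\xi)$ as the case $y_0=t$, $b_0=1$ of $F_{\mu,b}(y)$, it factors as a constant times the $t$-dependent part $\prod_{j=1}^r(t-\xi_j)^{b_j}$ of $v_b$, times the Schur specialization $s_{\lambda,b}(t,\xi_1,\dots,\xi_r)$. The Schur factor is the origin of the nonnegative-coefficient polynomials: by \ref{schurpoly} the coefficients of the Schur polynomial are nonnegative and all $\xi_j\ge0$, so collecting powers of $t$ yields a polynomial with nonnegative coefficients, which silently absorbs all roots of $f$ lying off $I$. In $\prod_j(t-\xi_j)^{b_j}$ I would separate parities: each even interior factor is a perfect square $\bigl((t-\xi_j)^{b_j/2}\bigr)^2$, while in case (a) an odd multiplicity at $t=1$ peels off one factor $1-t$ --- the multiplier appearing in \eqref{moncasefml} --- and leaves an even, hence square, remainder. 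Fixing the global sign by $f\ge0$ produces a representation of the required shape, the $g_i,h_j$ coming from the square parts and the $p_i,q_j$ from the Schur factor times the leftover power of $t$.

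The final point, and the one I would treat most carefully, is the degree bound. Because the Schur factor carries every root of $f$ \emph{outside} $I$ into the nonnegative-coefficient polynomial, only the roots \emph{inside} $I$ feed into the squares; their total multiplicity is $N=\dim V'-1\le n$, so $\deg g_i,\deg h_j\le\lfloor N/2\rfloor\le\lfloor n/2\rfloor$. For instance a double interior root, as in $f=(t^2-r)^2$, is realized with $g=t-\sqrt r$ of degree $1$ rather than with $t^2-r$ of degree $2$, the mirror factor $(t+\sqrt r)^2$ being furnished by the Schur polynomial with positive coefficients. I expect the main obstacle to be this exact root count: genuine extra multiplicity can occur at the endpoint $0$ --- already for a pure monomial $t^{m_0}$ --- and it is precisely the reduction to $f(0)\ne0$, combined with the Chebyshev bound at interior points, that eliminates it and lets Lemma \ref{simplema} reproduce $f$ on the nose within the degree budget.
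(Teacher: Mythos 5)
Your proposal is correct and follows essentially the same route as the paper: decompose into extreme rays, reduce to $f(0)\ne0$ and full degree, combine the lower root bound from Corollaries \ref{xrayrootscpt}/\ref{xrayrootsnoncpt} with the fewnomial upper bound to get exactly $n$ positive roots in $I$, then factor $f$ via Lemma \ref{simplema} and Proposition \ref{bialtmultroots} into a square (times possibly $1-t$) and a nonnegative-coefficient Schur specialization. Your only deviations are cosmetic --- passing to the span of the monomials actually occurring in $f$, citing the Chebyshev-system property instead of Descartes' rule, and deducing condition (ii) of Lemma \ref{simplema} from Proposition \ref{extstrahl} rather than from the non-vanishing of the Schur factor --- and none of these affects the argument.
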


Conversely, when $f$ has a representation as in the theorem, it is
obvious that $f\ge0$ on $I$.

\begin{lab}
To start the proof of Theorem \ref{moncasexplicit} (in either case
(a) or~(b)), let $Q$ be the set of all $f\in V$ that have a
representation \eqref{moncasefml}. Then $Q$ is a convex cone, and
$Q\subset P$.
Let $E\subset P$ be the set of all polynomials $f$ that generate an
extreme ray of $P$.
Every element of $P$ is a sum of finitely many elements of $E$,
since the cone $P$ is closed and pointed. To prove $Q=P$, it
therefore suffices to show $f\in Q$ for any given $f\in E$.
\end{lab}

\begin{lab}\label{reductofproof}%
Let us make two more reduction steps.
Assume that $f\in Q$ has been shown (for all monomial subspaces)
whenever $f\in E$ satisfies $f(0)>0$. Let $f\in E$ with $f(0)=0$.
Writing $f=t^w\tilde f$ where $w=\ord_0(f)$, there is an index
$0\le s\le n$ with $m_s=w$.
The subspace $\tilde V=\lin(t^{m_0-w},\dots,t^{m_{s-1}-w},1)$ of
$\R[t]$ has dimension $s+1\le n+1$ and contains $\tilde f$. Moreover
$\tilde f\ge0$ on $I$ and $\tilde f(0)>0$. To fix ideas, assume we
are in case~(a), so $I=[0,1]$. By assumption, the theorem holds for
$\tilde f$ and the subspace $\tilde V$. This means that there is an
identity
\begin{equation}\label{inductfml}%
	\tilde f\>=\>\sum_ig_i(t)^2p_i(t)+(1-t)\sum_jh_j(t)^2q_j(t)
\end{equation}
where $\deg(g_i),\,\deg(h_j)\le\lfloor\frac s2\rfloor$ and the
$p_i,\,q_j$ have non-negative coefficients, such that every summand
in \eqref{inductfml} has degree $\le m_0-w$. Multiplying the identity
with $t^w$, we get an identity for $f$ as claimed in Theorem
\ref{moncasexplicit}(b).
In case~(b), the argument is exactly the same.
\end{lab}

\begin{lab}\label{degreduct}%
So we need to establish an identity \eqref{moncasefml} for every
$f\in E$ with $f(0)>0$. For this, we clearly may discard all
monomials of degree greater than $d=\deg(f)$. In other words, we may
assume that $\deg(f)=m_0$.
This reduction step plays a role only in case~(b). (In fact,
$\deg(f)=m_0$ is automatic if $I=[0,1]$, as will be seen from the
proof below.)
\end{lab}

\begin{lab}\label{genaunnst}%
From Descartes' rule of signs (e.g.\ \cite{ks} Cor.~1.10.3)
it follows that every non-zero $f\in V$ has at most $n$ strictly
positive roots, counting with multiplicity.
On the other hand, Corollaries \ref{xrayrootscpt} and
\ref{xrayrootsnoncpt} show that every $f\in E$ with $\deg(f)=m_0$
has at least $n$ roots in $I$. If in addition $f(0)>0$, it follows
that $f$ has precisely $n$ strictly positive roots, and they all lie
in~$I$.
\end{lab}

\begin{lab}
Let $f\in E$ with $f(0)>0$ and $\deg(f)=m_0$. Note that $m_n=0$ now
since $f(0)\ne0$. By \ref{genaunnst}, $f$ has precisely $n$ positive
roots, and they all lie in~$I$. It suffices to show that every such
$f$ satisfies an identity \eqref{moncasefml}.

Let $\xi_1,\dots,\xi_r$ be the different positive roots of $f$, and
let $b_i=\ord_{\xi_i}(f)$ ($i=1,\dots,r$). Each $b_i$ is an even
integer, except possibly in case~(a) when $\xi_i=1$. We have
$\sum_{i=1}^rb_i=n$ since $f$ has $n$ roots in~$I$.

Consider the determinant \eqref{bigmatbieq} in \ref{bigmatbi}, with
$b_0:=1$ and $p_i=t^{m_i}$ ($i=0,\dots,n$). After substituting
$y_0=t$ and $y_i=\xi_i$ ($i=1,\dots,r$), Proposition
\ref{bialtmultroots} shows that the determinant has a factorization
$$F_{\mu,b}(t,\xi_1,\dots,\xi_r)\>=\>
\gamma\prod_{i=1}^r(t-\xi_i)^{b_i}\cdot
s_\lambda\bigl(t,\>\underbrace{\xi_1,\dots,\xi_1}_{b_1},\>\dots,\>
\underbrace{\xi_r,\dots,\xi_r}_{b_r}\bigr)$$
with $\gamma\ne0$ a constant. The last factor
$s_\lambda(t,\xi_1,\dots,\xi_r)$ is a polynomial in~$t$. Since all
coefficients of the Schur polynomial $s_\lambda$ are $\ge0$, and
since $\xi_i>0$ for all $i$, this polynomial is not identically zero
(and has degree $\lambda_0=m_0-n$). Hence the determinant is
non-zero, and Lemma \ref{simplema} implies that it agrees with $f$ up
to scaling.
This means that $f$ has a factorization
\begin{equation}\label{fidentitat}%
f\>=\>\gamma'\prod_{i=1}^r(t-\xi_i)^{b_i}\cdot
s_\lambda(t,\underbrace{\xi_1,\dots,\xi_1}_{b_1},\dots,
\underbrace{\xi_r,\dots,\xi_r}_{b_r})
\end{equation}
with a constant $\gamma'\ne0$. The first factor has the form
$$\prod_{i=1}^r(t-\xi_i)^{b_i}\>=\>g(t)^2u(t)$$
where $\deg(g)=k-1=\lfloor\frac n2\rfloor$ and $u(t)=1$ or $t-1$.
The case $u(t)=t-1$ occurs only in case~(a) ($I=[0,1]$) when $n$ is
odd.
The last factor $s_\lambda(t,\xi_1,\dots)$ in \eqref{fidentitat} is a
polynomial in $t$ with non-negative coefficients.
So, up to a non-zero constant factor, the right hand side has the
form claimed in Theorem \ref{moncasexplicit}.
Since $f|_I\ge0$ we see that $\gamma'>0$ in case~(a) with $n$ even, or
in case~(b). When $n$ is odd in case~(a), we have $\xi_i=1$ for
some~$i$, and $b_i$ is odd, so $\gamma'<0$ in this case.
The theorem is proved.
\qed
\end{lab}

Let $\Sigma=\Sigma_{2k}:=\{g\in\R[t]\colon\deg(g)\le2k$, $g\ge0$ on
$\R\}$. Each $g\in\Sigma_{2k}$ can we written $g=g_1^2+g_2^2$ with
polynomials $g_1,\,g_2$ of degree $\le k$. Theorem
\ref{moncasexplicit} can be stated in the following alternative form:

\begin{cor}\label{cor2moncasthm}%
Again let (a) $I=[0,1]$ or (b) $I=\R_\plus$. With assumptions as in
Theorem \ref{setupmoncase}, the inclusion
\begin{itemize}
\item[(a)]
$P\>\subset\>\Bigl(\Sigma+t\Sigma+\cdots+t^{m_0-2k}\Sigma\Bigr)+
(1-t)\Bigl(\Sigma+t\Sigma+\cdots+t^{m_0-2k-1}\Sigma\Bigr)$
\end{itemize}
or
\begin{itemize}
\item[(b)]
$P\>\subset\>\Bigl(\Sigma+t\Sigma+\cdots+t^{m_0-2k}\Sigma\Bigr)$
\end{itemize}
holds, respectively.
\end{cor}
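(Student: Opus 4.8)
The plan is to treat Corollary~\ref{cor2moncasthm} as a reformatting of Theorem~\ref{moncasexplicit}: I would take the representation \eqref{moncasefml} and rewrite each of its summands as an element of the cone on the right-hand side. Both right-hand cones are closed under addition, and Theorem~\ref{moncasexplicit} writes every $f\in P$ as a finite sum of terms $g^2p$ (in case (a) also $(1-t)h^2q$) with $\deg(g),\deg(h)\le k-1$, with $p,q$ having non-negative coefficients, and with every summand of degree $\le m_0$. Hence it suffices to show that one such term lies in the claimed cone, so I would fix a term $g^2p$ and look for an expression $g^2p=\sum_{j'}t^{j'}\sigma_{j'}$ with $\sigma_{j'}\in\Sigma$ and $0\le j'\le m_0-2k$.

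The mechanism is to expand $p=\sum_jc_jt^j$ with $c_j\ge0$ and distribute the monomials $c_jt^jg^2$ among the blocks $t^{j'}\Sigma$. The squares $g^2$ coming from the theorem only fall into the smaller cone $\Sigma_{2(k-1)}$, but the point of using $\Sigma=\Sigma_{2k}$ is that it leaves one spare pair of degrees: since $\deg(g)\le k-1$ one has $\deg(t^sg)\le k$, hence $c_j(t^sg)^2\in\Sigma$, whenever $s\le k-\deg(g)$. This is what lets me move an even number $2s$ of factors $t$ into the square and rewrite $c_jt^jg^2=t^{\,j-2s}\cdot c_j(t^sg)^2$. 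I would then choose $s$ so that the residual exponent $j'=j-2s$ lands in the admissible window $[0,m_0-2k]$: the upper end is reached because $\deg(g^2p)\le m_0$ forces each exponent $j$ of $p$ to satisfy $j\le m_0-2\deg(g)$, so the maximal shift $s=k-\deg(g)$ pushes $j'$ down to at most $m_0-2k$, while small $j$ are accommodated by taking $s$ small.

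The step I expect to be the main obstacle is precisely this exponent-and-parity bookkeeping at the extreme degrees. Pushing an even number of factors $t$ into a square preserves non-negativity on $\R$, but an odd residual exponent cannot be lowered further, so the delicate point is to verify that the residual exponents arising from both the even- and odd-degree monomials of $p$ all remain inside $[0,m_0-2k]$; the binding cases are the top monomials of $p$ together with the lowest ones, and it is exactly here that the degree budget $\deg(g^2p)\le m_0$ and the non-negativity of the $c_j$ must be used sharply. Once case (b) is settled, case (a) follows by running the same argument on the terms $(1-t)h^2q$: the per-summand bound $\deg\bigl((1-t)h^2q\bigr)\le m_0$ gives $\deg(h^2q)\le m_0-1$, and this single lost degree is what shifts the top index down by one and produces the second block $(1-t)\bigl(\Sigma+\cdots+t^{m_0-2k-1}\Sigma\bigr)$.
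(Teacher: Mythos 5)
Your plan is essentially the paper's own (unwritten) argument: the corollary is presented there as a mere reformulation of Theorem~\ref{moncasexplicit}, and the intended justification is exactly the redistribution you describe --- expand $p=\sum_jc_jt^j$ and absorb an even power $t^{2s}$ of each monomial into the square. Your top-end bookkeeping is correct: if $j\ge 2(k-\deg g)$, take $s=k-\deg g$ and then $0\le j'=j-2k+2\deg g\le m_0-2k$ because $j\le m_0-2\deg g$; if $j<2(k-\deg g)$, take $s=\lfloor j/2\rfloor$, so that $s+\deg g<k$ and $j'\in\{0,1\}$. This closes the step you defer, with one caveat.

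The caveat is the parity point you flag, and it is genuine: every odd exponent $j$ of $p$ leaves an odd residual $j'\ge1$, so the argument needs $m_0-2k\ge1$, i.e.\ $m_0\ge 2k+1$ (and correspondingly $m_0\ge 2k+2$ for the $(1-t)$-block in case~(a), where the budget is $\deg(h^2q)\le m_0-1$). This is not a defect of your method alone: for $m_0\le 2k$ the inclusion as literally stated can fail. For instance, with $\mu=(4,3,0)$ one has $n=2$, $k=2$, $m_0-2k=0$, and $t^3\in P$ for $I=\R_\plus$, yet $t^3\notin\Sigma_4$, so (b) is false there. The corollary carries an implicit hypothesis $m_0>2k$, which is exactly the regime in which the paper applies it (compare the assumption $d>2k$ in Corollary~\ref{pos:stellen:satz}). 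You should state that hypothesis explicitly; with it, your proof is complete and coincides with the intended one.
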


\begin{rem}\label{sparsepss}%
Part (b) of Corollary \ref{cor2moncasthm} can be considered to be a
\emph{sparse} (or \emph{fewnomial}) \emph{positivstellensatz} (or
rather, \emph{nichtnegativstellensatz}) for univariate polynomials:
Every
polynomial $f\in\R[t]$ with $m$ monomials that is non-negative on
$\R_\plus$ can be written as a (finite) sum
$$f\>=\>\sum_{i\ge0}t^i\bigl(p_i(t)^2+q_i(t)^2\bigr)$$
where $p_i,\,q_i$ are polynomials of
degree~$\le\lfloor\frac{m-1}2\rfloor$. This point of view will be
expanded in more detail in the next section.
\end{rem}

\begin{rem}\label{semidefblockrep}%
As before, let $V\subset\R[t]$ be a linear subspace generated by
$n+1$ monomials, and let $P=\{f\in V\colon f\ge0$ on~$I\}$ where
(a)~$I=[0,1]$ or (b)~$I=\R_\plus$. In either case, Corollary
\ref{cor2moncasthm} allows us to read off a block semidefinite
representation of $P$ of block size at most $k+1$. Let
$\varphi\colon\sfS^{k+1}\to\R[t]_{2k}$ be the linear map that
sends a symmetric matrix $M=(a_{ij})_{0\le i,j\le k}$ to
$$\varphi(M)\>:=\>(1,t,\dots,t^k)\cdot M\cdot(1,t,\dots,t^k)^\top \>=\>
\sum_{i,j=0}^ka_{ij}t^{i+j}$$
Then $\varphi(\sfS^{k+1}_\plus)=\Sigma_{2k}$. In case~(a), consider
the linear map
$$\phi\colon\bigl(\sfS^{k+1}\bigr)^{2(m_0-2k)+1}\>\to\>\R[t]$$
given by
$$\bigl(M_0,\dots,M_{m_0-2k};\,N_0,\dots,N_{m_0-2k-1}\bigr)\>\mapsto\>
\sum_{i=0}^{m_0-2k}t^i\varphi(M_i)+(1-t)\sum_{j=0}^{m_0-2k-1}
t^jS\varphi(N_j)$$
For (b), consider the linear map
$\phi\colon\bigl(\sfS^{k+1}\bigr)^{m_0-2k+1}\>\to\>\R[t]$
given by
$$\bigl(M_0,\dots,M_{m_0-2k}\bigr)\>\mapsto\>
\sum_{i=0}^{m_0-2k}t^i\varphi(M_i)$$
In either case we have $P=V\cap\im(\phi)$, according to Corollary
\ref{cor2moncasthm}. This is an explicit block semidefinite
representation of $P$ of block size $k+1$.
\end{rem}

\begin{rem}\label{semidefblockrep1}%
For sparse polynomials $f$ in $\R[t]$ that are non-negative on the whole
real axis, there does not in general exist a sparse decomposition
\[
f = \sum_{i\in 2 \Z_+} t^i \left(p_i(t)^2 + q_i(t)^2 \right)
\]
similar to the one described in Remark~\ref{sparsepss}.
For example, $f=3t^4-4t^3+1=(t-1)^2(3t^2+2t+1)$ is non-negative on
$\R$ but cannot be written $f=g_1(t)+t^2 g_2(t)$
with $g_1,\,g_2$ sums of squares of linear polynomials.

Still, we may easily produce a block semidefinite representation of
block size $k+1$ for $P=\{f\in V\colon f\ge0$ on~$\R\}$, from the
case $I=\R_\plus$ in \ref{semidefblockrep}. It suffices to remark that
$f(t)\ge0$ on $\R$ is equivalent to $f(t)$ and $f(-t)$ both being
$\ge0$ on $\R_\plus$.
\end{rem}

By cone duality we get the desired theorem for convex hulls of
monomial space curves:

\begin{thm}\label{thm:mon:curve}
Let $m_1,\dots,m_n\ge1$ be integers, and let $I\subset\R$ be a \sa\
set. The closed convex hull $K$ of the set
$$S\>=\>\bigl\{(t^{m_1},\dots,t^{m_n})\colon t\in I\bigr\}$$
in $\R^n$ has semidefinite extension degree at most
$\lfloor\frac n2\rfloor+1$.
\end{thm}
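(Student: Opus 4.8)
The plan is to recover $K$ from the fewnomial cone $P$ of Section~\ref{conv:hull:curves} by conic duality, and then to turn the block semidefinite representation of $P$ into one of its dual cone, exploiting that the positive semidefinite cone is self-dual.

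First I would homogenize. Setting $\psi(t)=(t^{m_1},\dots,t^{m_n})$ and $C=\overline{\cone}\setcond{(1,\psi(t))}{t\in I}\subseteq\R\times\R^n$, one has $K=\setcond{y\in\R^n}{(1,y)\in C}$, so a lifted representation of $C$ of a given size restricts, along the affine slice $x_0=1$, to one of $K$ of the same size. Writing $V=\lin\{1,t^{m_1},\dots,t^{m_n}\}$ and identifying $\R^{n+1}$ with the coefficient space of $V$, the pairing $\bil{(\lambda_0,\lambda)}{(1,\psi(t))}=\lambda_0+\sum_i\lambda_it^{m_i}$ shows that the dual cone $C^*$ is exactly $P=\setcond{f\in V}{f\ge0\text{ on }I}$. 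Since $C$ is a closed convex cone, biduality (\ref{faces}) yields $C=C^{**}=P^*$, so it suffices to produce a lifted representation of $P^*$ with LMIs of size $k:=\lfloor\frac n2\rfloor+1$.

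Next I would reduce the interval and import the representation of $P$. A semialgebraic $I\subseteq\R$ is a finite union of points and intervals; as the conic hull of a union is the sum of the cones and a Minkowski sum of spectrahedral shadows is again one with LMIs of the same maximal size, I may assume $I$ is a single closed interval. The scaling $t\mapsto ct$ and the reflection $t\mapsto-t$ act on $\R^n$ as the invertible diagonal maps $y_i\mapsto c^{m_i}y_i$ and $y_i\mapsto(-1)^{m_i}y_i$, hence preserve $\sxdeg$; splitting at $0$ and applying them reduces every interval to $I=[0,1]$, $\R_\plus$ or $\R$ (Theorem~\ref{moncasexplicit}, Remark~\ref{semidefblockrep1}), the remaining intervals bounded away from $0$ being treated by the same Schur-polynomial factorization with boundary factors $t-a$, $b-t$ in place of $t$, $1-t$. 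In each case Remark~\ref{semidefblockrep} provides a linear map $\phi\colon\mathcal U\to W:=\R[t]_{m_0}$, where $\mathcal U$ is a finite product of copies of $\sfS^{k}$ (the Gram size $k=\lfloor\frac n2\rfloor+1$ being that of the squares of degree $\le\lfloor\frac n2\rfloor$ in Theorem~\ref{moncasexplicit}), with $P=V\cap\phi(\mathcal K)$ and $\mathcal K\subseteq\mathcal U$ the product of the cones $\sfS^{k}_\plus$.

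Now I would dualize. With the trace inner product $\mathcal K$ is self-dual, so $D:=\phi(\mathcal K)$ has $D^*=\setcond{\xi\in W^\vee}{\phi^*(\xi)\in\mathcal K}$, a closed cone defined by blockwise LMIs of size $k$. Restricting functionals along the quotient $\pi\colon W^\vee\onto V^\vee$ (whose kernel is $V^\perp$) and using $(V\cap D)^*=\overline{V^\perp+D^*}$, I obtain
\[
C\;=\;P^*\;=\;\pi\bigl((V\cap D)^*\bigr)\;=\;\overline{\setcond{v\in V^\vee}{\ex\,\xi\in W^\vee\colon\pi(\xi)=v,\ \phi^*(\xi)\succeq0\text{ blockwise}}}.
\]
Dropping the closure turns the right-hand side into a lifted representation of $C$ with LMIs of size $k$ (the lift variable being $\xi\in W^\vee\cong\R^{m_0+1}$); slicing $x_0=1$ then gives $\sxdeg(K)\le k=\lfloor\frac n2\rfloor+1$. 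The generating rays are visible in this lift: for $t\in I\subseteq\R_\plus$ the evaluation $\xi=\mathrm{ev}_t|_W$ certifies $(1,\psi(t))$, because $\phi^*(\mathrm{ev}_t)$ is the tuple of rank-one matrices $t^iu(t)u(t)^\top\succeq0$ with $u(t)=(1,t,\dots,t^{k-1})$, while recession directions are lifted by leading-coefficient functionals.

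The hard part is justifying that the closure may be dropped, i.e.\ that the projected spectrahedral shadow $\pi(D^*)$ is closed and equals $C$ on the nose rather than merely being dense in it; from the dual viewpoint $\pi(D^*)$ is precisely the cone of sparse truncated moment vectors, and closedness of such cones is the classical subtle point of the truncated moment problem. For compact $I$ it is automatic, since $C=\cone(\{1\}\times\psi(I))$ is then the cone over a compact base and the truncated moment cone over a compact set is closed. For the unbounded model cases $\R_\plus$ and $\R$ I would establish closedness directly, controlling the recession cone via the explicit lifts above, or appeal to the general principle that spectrahedral shadows are stable under conic duality with the LMI size preserved. A secondary point to verify is the interval reduction for intervals bounded away from $0$, where the factorization of \ref{genaunnst}--\eqref{fidentitat} still applies but the endpoint multiplicities must be tracked against the new boundary factors.
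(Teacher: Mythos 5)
Your proposal is correct and follows essentially the same route as the paper: identify $K$ (via homogenization) with an affine slice of $P^*$, transfer the block semidefinite representation of $P$ from Remark~\ref{semidefblockrep} across conic duality, and reduce a general semialgebraic $I$ to the model intervals (with the weights $t-a$, $b-t$ replacing $t$, $1-t$ for intervals bounded away from $0$). The closure-dropping difficulty you correctly isolate as "the hard part" is exactly what the paper outsources to the cited invariance $\sxdeg(P^*)=\sxdeg(P)$ (\cite{sch:sxdeg} Prop.~1.7), i.e.\ your "general principle" fallback is the paper's actual argument, so no new idea is missing.
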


\begin{proof}
The set $K$ is identified with an affine-linear slice of the dual
cone $P^*$ of $P$. Since $\sxdeg(P^*)=\sxdeg(P)$
(\cite{sch:sxdeg} Prop.~1.7), it suffices to prove
$\sxdeg(P)\le\lfloor\frac n2\rfloor+1$.
We may assume that the $m_i$ are pairwise distinct. When
$I=[0,1]$ or $I=\R_\plus$, the claim
$\sxdeg(P)\le\lfloor\frac n2\rfloor+1$ was shown in Remark
\ref{semidefblockrep}. We sketch how the case of other sets $I$ can
essentially be reduced to these two cases, without going into full
details. Clearly $I$ can be assumed to be closed. If $I=I_1\cup I_2$
with $I_1,\,I_2$ \sa, it is enough to prove the claim for both $I_1$
and $I_2$, in view of \cite{sch:sxdeg} Prop.~1.6.
In this way we reduce to considering $I=[a,b]$ or
$I=\left[a,\infty\right[$ where $0\le a<b<\infty$. The cases $a=0$
are already done (assuming $b=1$ in the compact case was nowhere
essential). Assume $I=[a,b]$ with $0<a<b<\infty$. Then every $f\in P$
that generates an extreme ray of $P$ has exactly $n$ roots in $I$,
counting with multiplicities. So the reduction step
\ref{reductofproof} in the proof of Theorem \ref{moncasexplicit} is
not needed. Otherwise we may just follow the proof of this theorem.
In this way we arrive at a representation of every element of $P$
in a form similar to \ref{moncasexplicit}(a), but with weights
$1$, $t-a$, $b-t$ and $(t-a)(b-t)$ instead of only $1$ and $1-t$.
When $I=\left[a,\infty\right[$ with $a>0$, we may proceed in a
similar way.
\end{proof}

\begin{rem}
For $I=[0,1]$ or $I=\R_\plus$, an explicit block semidefinite
representation of $K$ of block size $k=\lfloor\frac n2\rfloor+1$
can be obtained from Remarks \ref{semidefblockrep},
\ref{semidefblockrep1} by dualizing.
\end{rem}

\begin{rem}
It is natural to ask whether the bound
$\sxdeg(K)\le\lfloor\frac n2\rfloor+1$ extends to cases more general
than convex hulls of monomial curves. In fact, the same bound is true
in general whenever $K\subset\R^n$ is the closed convex hull of a
one-dimensional semialgebraic set in $\R^n$ \cite{sch:gencase}.
However the proof gets much more difficult than in the monomial case.
Even when the curve is parametrized by polynomials instead of
monomials, one has to argue locally on sufficiently small intervals
on the curve, and there does not seem to be an explicit form for a
block semidefinite representation. When the curve is non-rational,
the proof becomes technically even much more complicated.
\end{rem}


\section{Sparse non-negativity certificates in polynomial optimization} \label{sect:sparse}

\subsection{Sums of copositive fewnomials and sparse semidefinite relaxations}
We are going to take a second look at Corollary \ref{cor2moncasthm},
concentrating on part~(b). A univariate polynomial $f\in\R[t]$ will
be called a \emph{$k$-nomial} if $f$ is a linear combination of at
most $k$ monomials $t^i$. We say that $f$ is \emph{copositive} if
$f\ge0$ on $\R_\plus$. Given a finite set
$J \subset\Z_\plus :=\{0,1,2,\dots\}$, let
\[
	\CP(J) := \setcond{p \ \in \R[t]}{p \ge 0 \ \text{on} \ \R_+ \ \text{and}  \ \supp(p) \subseteq J}.
\]
This is the cone $P$
considered in Sect.~3, for the monomial curve corresponding to~$J$.
Let
\[
	\SOCF_{k,d} := \sum_{\substack { J \subseteq \{0,\ldots,d\} \, : \\ |J| = k}} \CP(J)
\]
The acronym $\SOCF$ stands for \emph{sums of copositive fewnomials}. So
$f\in\R[t]_d$ lies in $\SOCF_{k,d}$ if, and only if, $f$ can be
written as a finite sum of copositive $k$-nomials of degree at
most~$d$. It is obvious how to generalize this setup from univariate
polynomials to a multivariate setting. This could be an interesting
setup to explore in the future.
	
As before, let $\Sigma_{2k}\subset\R[t]$ denote the set of sums of
squares polynomials of degree $\le2k$. The main result of Sect.~3
implies the following positivstellensatz for the cones SOCF:
	
\begin{cor}\label{pos:stellen:satz}%
For all integers $k,\,d\ge1$ with $d>2k$, we have
\begin{equation}\label{socf:repr}%
\SOCF_{2k+1,d}\>=\>t^0 \, \Sigma_{2k}+\cdots+t^{d-2k} \,\Sigma_{2k}.
\end{equation}
Moreover $\sxdeg(\SOCF_{2k+1,d})=k+1$.
\end{cor}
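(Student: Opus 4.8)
The plan is to establish the two assertions of Corollary~\ref{pos:stellen:satz} separately: first the identity \eqref{socf:repr} describing $\SOCF_{2k+1,d}$ as the truncated preordering-type cone $\sum_{i=0}^{d-2k}t^i\Sigma_{2k}$, and then the computation of its semidefinite extension degree. Write $R:=t^0\Sigma_{2k}+\cdots+t^{d-2k}\Sigma_{2k}$ for the right-hand cone.

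For the inclusion $R\subset\SOCF_{2k+1,d}$, I would argue one summand at a time. A typical generator of $R$ is $t^i\sigma$ with $\sigma\in\Sigma_{2k}$ and $0\le i\le d-2k$; since $\sigma\ge0$ on all of $\R$ it is a fortiori $\ge0$ on $\R_\plus$, and $t^i\ge0$ on $\R_\plus$, so $t^i\sigma$ is copositive. Moreover $\deg(t^i\sigma)\le i+2k\le d$, so $t^i\sigma\in\R[t]_d$. The only point needing care is the \emph{fewnomial} count: a single $t^i\sigma$ need not have at most $2k+1$ terms. But $\sigma$, having degree $\le2k$, is supported on at most $2k+1$ exponents, so $t^i\sigma$ is supported on at most $2k+1$ exponents as well; hence $t^i\sigma\in\CP(J)$ for a set $J$ with $|J|\le 2k+1$, and after padding $J$ to size exactly $2k+1$ inside $\{0,\dots,d\}$ (legitimate since $d>2k$), we get $t^i\sigma\in\SOCF_{2k+1,d}$. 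Summing, $R\subset\SOCF_{2k+1,d}$.

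The reverse inclusion $\SOCF_{2k+1,d}\subset R$ is where the main result does the work. By definition $\SOCF_{2k+1,d}$ is generated by the cones $\CP(J)$ with $J\subseteq\{0,\dots,d\}$, $|J|=2k+1$; so it suffices to show $\CP(J)\subset R$ for each such $J$. Fix $J=\{m_0>\cdots>m_{2k}\}$ and set $n:=2k$, so that $\lfloor n/2\rfloor=k$ and $\CP(J)$ is exactly the cone $P$ from \ref{setupmoncase} for the interval $I=\R_\plus$. Theorem~\ref{moncasexplicit}(b), equivalently Corollary~\ref{cor2moncasthm}(b), then gives $\CP(J)=P\subset\Sigma+t\Sigma+\cdots+t^{m_0-2k}\Sigma$ where $\Sigma=\Sigma_{2k}$. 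Since $m_0\le d$, this cone is contained in $R=\sum_{i=0}^{d-2k}t^i\Sigma_{2k}$. Thus every $\CP(J)$ lands in $R$, and \eqref{socf:repr} follows. The crucial input here is precisely that the sos degrees in Corollary~\ref{cor2moncasthm} depend only on the sparsity $n+1=2k+1$ and not on the degree $d$.

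For the degree statement $\sxdeg(\SOCF_{2k+1,d})=k+1$, I would read the upper bound off the representation just established: the block semidefinite representation of Remark~\ref{semidefblockrep}(b) realizes $R$ as a linear image of a product of copies of $\sfS^{k+1}_\plus$ via the map $\varphi$, so $\sxdeg(R)\le k+1$. The matching lower bound is the delicate half. I expect the main obstacle to be showing $\sxdeg(\SOCF_{2k+1,d})\ge k+1$, i.e.\ ruling out any lifted representation with smaller LMIs. The natural route is to exhibit a single cone $\CP(J)$ inside $\SOCF_{2k+1,d}$ that already forces LMIs of size $k+1$; one would invoke the known fact that $\Sigma_{1,2k}=\Sigma_{2k}$ itself has $\sxdeg=\binom{1+k}{1}=k+1$ (the univariate case of the result of \cite{av} cited in the introduction), together with monotonicity of $\sxdeg$ under taking faces or affine slices, to transfer this lower bound to $\SOCF_{2k+1,d}$. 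Care is needed because $\sxdeg$ can drop under linear images, so the lower bound argument must identify $\Sigma_{2k}$ (or a cone of the same $\sxdeg$) as a face or a section of $\SOCF_{2k+1,d}$ rather than merely as a quotient; choosing $J=\{0,1,\dots,2k\}$, so that $\CP(J)$ is the copositive analogue of $\Sigma_{2k}$ on $\R_\plus$, should make this identification transparent.
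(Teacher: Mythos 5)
Your proof of the identity \eqref{socf:repr} is correct and follows the same route as the paper's: the inclusion ``$\subset$'' is Corollary~\ref{cor2moncasthm}(b) applied to each $\CP(J)$ with $|J|=2k+1$, and the reverse inclusion is the routine verification you spell out (the paper dismisses it as obvious). Your upper bound $\sxdeg(\SOCF_{2k+1,d})\le k+1$ via the block representation of Remark~\ref{semidefblockrep} also matches; the paper obtains the whole equality by citing \cite{sch:sxdeg} Lemma~1.4(d).

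The gap is in your lower bound. The mechanism you propose --- realizing $\Sigma_{2k}$ as a face or an affine section of $\SOCF_{2k+1,d}$ --- cannot work. It is not a face: the polynomials $t$ and $t^2-t+1$ both lie in $\SOCF_{2k+1,d}$ (each is copositive with at most three terms and degree $\le d$), their sum $t^2+1$ lies in $\Sigma_{2k}$, yet $t\notin\Sigma_{2k}$. It is not a linear section either: any linear subspace $L\supset\Sigma_{2k}$ contains $\lin(\Sigma_{2k})=\R[t]_{2k}$, and $\SOCF_{2k+1,d}\cap\R[t]_{2k}$ is the full copositive cone $\Sigma_{2k}+t\,\Sigma_{2k-2}$, which strictly contains $\Sigma_{2k}$. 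Your fallback choice $J=\{0,\dots,2k\}$ does produce a genuine coordinate section, namely $\CP(\{0,\dots,2k\})=\SOCF_{2k+1,d}\cap\R[t]_{2k}$; but then the fact you actually need is $\sxdeg\bigl(\Sigma_{2k}+t\,\Sigma_{2k-2}\bigr)\ge k+1$, and this does not follow from $\sxdeg(\Sigma_{2k})=k+1$ by any of the monotonicity properties you invoke, since lower bounds on $\sxdeg$ do not pass from a subcone to a larger cone (consider $\Sigma_{2k}\subset\R[t]_{2k}$, the latter having $\sxdeg=1$). So the lower bound still requires an independent input, e.g.\ the argument of \cite{av} adapted to the univariate copositive cone, or a dual argument for the convex hull of the moment curve over $\R_\plus$. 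In fairness, the paper itself delegates exactly this point to the citation of \cite{sch:sxdeg} Lemma~1.4(d) rather than arguing it; but as written your sketch names a bridge that provably does not exist.
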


For $d=2k$, note that $\SOCF_{2k+1,2k}$ is just the cone of all
copositive univariate polynomials of degree at most $2k$, i.e.,
it is the cone
\[
\setcond{ f \in \R[t]_{2k}}{f \ge 0 \ \text{on} \
\R_+} = \Sigma_{2k}+t\,\Sigma_{2k-2},
\]
where the latter description of copositivity in the univariate case
is well known.

\begin{proof}
The inclusion ``$\subset$'' holds since, for every set
$J \subset\{0,\dots,d\}$ with $|J|\le2k+1$, the cone $\CP(J)$ is
contained in the right hand side of \eqref{socf:repr} (Corollary
\ref{cor2moncasthm}(b)).
The reverse inclusion is obvious. The last
statement follows from identity \eqref{socf:repr} by using
\cite{sch:sxdeg} Lemma~1.4(d).
\end{proof}

To describe the duals of the SOCF cones, we identify
$v \in (\R[t]_d)^{\vee}$ and $(v_0,\ldots,v_d) \in \R^{d+1}$
via $v_i = v(t^i)$.
\begin{lem}\label{socfdual}%
Let $d,\,k\ge1$ with $d>2k$. The dual of $\SOCF_{2k+1,d}$ is the cone
$$\bigl\{v=(v_0,\dots,v_d)\in\R^{d+1}\colon M_{0,k}(v)\succeq0,\,
\dots,\,M_{d-2k,k}(v)\succeq0\bigr\},$$
where
\[
	M_{s,k}(v):=\bigl(v_{s+i+j}\bigr)_{i,j=0,\dots,k} \in \R^{(k+1) \times (k+1)}.
\]
\end{lem}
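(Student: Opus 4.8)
The plan is to compute the dual by writing $\SOCF_{2k+1,d}$ as a sum of cones and dualizing termwise. By Corollary \ref{pos:stellen:satz} we have $\SOCF_{2k+1,d}=\sum_{s=0}^{d-2k}t^s\Sigma_{2k}$ inside $\R[t]_d$. Since each summand is a convex cone (in particular contains $0$), the elementary identity $(\sum_s C_s)^\ast=\bigcap_s C_s^\ast$ applies: a functional $v$ is non-negative on the sum iff it is non-negative on each summand separately. Thus it suffices to identify $(t^s\Sigma_{2k})^\ast$ with $\{v\in\R^{d+1}\colon M_{s,k}(v)\succeq0\}$ for each fixed $s\in\{0,\dots,d-2k\}$, and then intersect over $s$.

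For a fixed $s$, I would parametrize $t^s\Sigma_{2k}$ using the map $\varphi\colon\sfS^{k+1}\to\R[t]_{2k}$ from Remark \ref{semidefblockrep}, for which $\varphi(\sfS_\plus^{k+1})=\Sigma_{2k}$. Writing $M=(a_{ij})_{0\le i,j\le k}$, one has $t^s\varphi(M)=\sum_{i,j=0}^k a_{ij}\,t^{s+i+j}$, and because $s+2k\le d$ every exponent $s+i+j$ lies in $\{0,\dots,d\}$, so the functional $v=(v_0,\dots,v_d)$ may be applied. The resulting pairing is
\[
v\bigl(t^s\varphi(M)\bigr)\>=\>\sum_{i,j=0}^k a_{ij}\,v_{s+i+j}\>=\>\tr\bigl(M\,M_{s,k}(v)\bigr),
\]
the Frobenius inner product of $M$ with the Hankel matrix $M_{s,k}(v)$.

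With this identity the conclusion is immediate: $v\in(t^s\Sigma_{2k})^\ast$ iff $\tr\bigl(M\,M_{s,k}(v)\bigr)\ge0$ for all $M\in\sfS_\plus^{k+1}$, which by self-duality of the positive semidefinite cone $\sfS_\plus^{k+1}$ is equivalent to $M_{s,k}(v)\succeq0$. Intersecting over $s=0,\dots,d-2k$ yields the claimed description. I do not expect any genuine obstacle here; the argument is a routine conic duality computation, and the only points requiring care are the verification of the trace-pairing formula (together with the bookkeeping that keeps all exponents $s+i+j$ within the range $\{0,\dots,d\}$) and the invocation of the two standard facts that the dual of a sum of cones is the intersection of the duals and that $\sfS_\plus^{k+1}$ is self-dual.
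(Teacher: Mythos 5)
Your proposal is correct and follows essentially the same route as the paper: decompose $\SOCF_{2k+1,d}$ via Corollary \ref{pos:stellen:satz}, use $(\sum_s C_s)^\ast=\bigcap_s C_s^\ast$, and identify each $(t^s\Sigma_{2k})^\ast$ with a psd Hankel condition. The only difference is that you verify the standard fact $(\Sigma_{2k})^\ast=\{u\colon M_{0,k}(u)\succeq0\}$ (shifted by $s$) explicitly via the trace pairing and self-duality of $\sfS_\plus^{k+1}$, where the paper simply cites it.
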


\begin{proof}
This follows from \eqref{socf:repr} in Corollary~\ref{pos:stellen:satz},
since $(C_1+C_2)^*=C_1^*\cap C_2^*$ and
since $(\Sigma_{2k})^*=\{u=(u_0,\dots,u_k)\colon M_{0,k}(u)\succeq0\}$;
see, for example, \cite{ble} Sect.~4.6.
\end{proof}

The notation $M_{s,k}(v)$ from Lemma~\ref{socfdual} is used in the
following proposition, which provides   primal and dual sparse
relaxations for the problem $\min_{\R_+} f$, with $f \in \R[t]_d$,
with a flexible choice of the sparsity threshold $2k+1$.

\begin{prop}\label{sparserelax}%
Let $k,d$ be positive integers with $2k <d$ and
 $f\in\R[t]_d$. Then the following
elements in $\R\cup\{\pm\infty\}$ coincide:
\begin{enumerate}
\item \label{socf:relax}
$\sup\{\lambda\in\R\colon f-\lambda\in\SOCF_{2k+1,d}\}$,
\item \label{sparse:sos}
$\sup\{\lambda\in\R\colon f-\lambda\in t^0 \, \Sigma_{2k}+\cdots+
t^{d-2k} \,\Sigma_{2k}\}$,
\item \label{sparse:moment}
$\inf\{\bil fv\colon v=(v_0,\dots,v_d)\in\R^{d+1}$, $v_0=1$,
$M_{0,k}(v),\dots,M_{d-2k,k}(v)\succeq0\}$.
\end{enumerate}
\end{prop}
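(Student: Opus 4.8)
The plan is to show that the three quantities coincide by first establishing the equality of (1) and (2), and then invoking conic duality (Proposition~\ref{conic:opt:duality}) to match (2) with (3).

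For the equality of (1) and (2): these are literally the same quantity, because Corollary~\ref{pos:stellen:satz} gives the identity of cones $\SOCF_{2k+1,d}=t^0\,\Sigma_{2k}+\cdots+t^{d-2k}\,\Sigma_{2k}$ (using the hypothesis $2k<d$). So the supremum over $\lambda$ with $f-\lambda$ in one cone equals the supremum with $f-\lambda$ in the other. This step is immediate once I cite the corollary.

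The main content is the equality of (2) and (3). I would like to apply Proposition~\ref{conic:opt:duality} with the ambient space $\R^N$ taken to be $\R[t]_d$, identified with $\R^{d+1}$ via the coefficient vector, and with the cone $C:=\SOCF_{2k+1,d}=t^0\,\Sigma_{2k}+\cdots+t^{d-2k}\,\Sigma_{2k}$. The roles of $p,q$ in that proposition are played by $f$ and the constant polynomial $q=1$. The left-hand side of \eqref{conicduality} is then exactly quantity (1)=(2). For the right-hand side, I note that under the identification $v\leftrightarrow(v_0,\dots,v_d)$ with $v_i=v(t^i)$, one has $\sprod{v}{1}=v_0$, so the normalization $\sprod{v}{q}=1$ becomes $v_0=1$; and $\sprod{f}{v}=\sum_i c_i v_i=\bil fv$ where $f=\sum_i c_it^i$. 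The constraint $v\in C^\ast$ is, by Lemma~\ref{socfdual}, precisely the conjunction of the LMIs $M_{0,k}(v)\succeq0,\dots,M_{d-2k,k}(v)\succeq0$. Thus the right-hand infimum of \eqref{conicduality} is exactly quantity (3), and Proposition~\ref{conic:opt:duality} delivers the equality.

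The hypotheses of Proposition~\ref{conic:opt:duality} require $C$ to be closed and pointed, $p=f\in C$, and $q\in C\setminus\{0\}$; the harder points to verify are these, since (2) and (3) are asserted to coincide for \emph{every} $f\in\R[t]_d$, not only for $f\in C$. I would handle this as follows. The cone $C$ is closed (a finite sum of the closed cones $t^i\Sigma_{2k}$, and closedness of such sums holds here because each summand is a linear image of $\sfS^{k+1}_\plus$ and the total is again such an image, hence closed) and pointed (it consists of polynomials nonnegative on $\R_\plus$, so $C\cap(-C)=\{0\}$). The constant $q=1$ lies in $C\setminus\{0\}$. For $f\notin C$, the supremum in (1)=(2) is $-\infty$, while on the dual side one checks directly that the infimum in (3) is also $-\infty$: since $f\notin C=C^{\ast\ast}$ there is a $v\in C^\ast$ with $\bil fv<0$, and scaling $v$ by large positive constants (which preserves $v\in C^\ast$ but violates $v_0=1$ unless handled) together with adding a suitable normalized element shows the infimum is unbounded below. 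The cleanest route is to establish the $f\notin C$ case separately and, for $f\in C$, apply Proposition~\ref{conic:opt:duality} verbatim; I expect the bookkeeping around the $f\notin C$ case and the attainment/finiteness dichotomy to be the only genuinely delicate part, the rest being a direct translation through the identifications and Lemma~\ref{socfdual}.
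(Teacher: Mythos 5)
Your overall route is exactly the paper's: the equality of (1) and (2) is the cone identity of Corollary~\ref{pos:stellen:satz}, and the equality of (2) and (3) is Proposition~\ref{conic:opt:duality} combined with the computation of the dual cone in Lemma~\ref{socfdual}; the paper's proof is precisely this two-line citation, and your translation of $\sprod{v}{q}=1$ into $v_0=1$ and of $v\in C^\ast$ into the LMIs is the intended bookkeeping. Your instinct to worry about the hypothesis $p\in C$ is legitimate (the paper silently glosses over it), but your resolution contains a misstep: it is \emph{not} true that the supremum in (1) is $-\infty$ whenever $f\notin C$. For example $f=t-1\notin\SOCF_{2k+1,d}$ since $f(0)<0$, yet $f-\lambda=t+(-1-\lambda)$ lies in the cone for every $\lambda\le-1$, so the supremum equals $-1$. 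The correct dichotomy is on whether $(f+\R\cdot1)\cap C$ is empty. If it is non-empty, pick $\lambda_0$ with $f-\lambda_0\in C$ and apply Proposition~\ref{conic:opt:duality} to $p=f-\lambda_0$; since $\bil{f}{v}=\bil{f-\lambda_0}{v}+\lambda_0$ for every $v$ with $v_0=1$, both the supremum and the infimum for $f$ are obtained from those for $f-\lambda_0$ by adding $\lambda_0$, and the equality follows. If it is empty, the supremum is $-\infty$ and one must separately show the infimum is $-\infty$; this needs the separation argument you gesture at (for each $a\in\R$, since $f-a\notin C=C^{\ast\ast}$ there is $z_a\in C^\ast$ with $\bil{f-a}{z_a}<0$, which after normalizing or perturbing to achieve $v_0=1$ yields a feasible $v$ with $\bil fv<a$), carried out for the shifted elements $f-a$ rather than for $f$ itself. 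With that correction the argument is complete and agrees with the paper's.
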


\begin{proof}
This follows from \eqref{socf:repr} in Corollary
\ref{pos:stellen:satz}, using Proposition \ref{conic:opt:duality}.
\end{proof}

\begin{rem}
For the case $k=0$, relaxations in Proposition~\ref{sparserelax} are
trivial with the optimal value $f(0)$ or $-\infty$, depending on
whether or not all coefficients of $f- f(0)$ are non-negative. But
already the simplest non-trivial case $k=1$ exhibits connections to
active ongoing research on sparse relaxations in polynomial
optimization, namely to the relaxations  that rely on the so-called
\emph{sage} and \emph{sonc} polynomials; see \cite{dre} and \cite{mur,kat},
respectively.  Both kinds of polynomials emerge from the same idea
applied to non-negativity on  $\R_+^n$ and $\R^n$, respectively.
A sage polynomial is a  sum of  polynomials that are non-negative on
$\R_+^n$ and whose supports are contained in a simplicial curcuit.
Here,  a simplicial circuit is an inclusion-minimal affinely dependent
set whose convex hull is a simplex. Since simplicial circuits within
$\R$ are merely three-element sets, we see that the cone  $\SOCF_{3,d}$
is the cone of all univariate sage polynomials of degree at most $d$.
Thus, the case $k=1$ provides a primal and a dual sage relaxation of
$\min_{\R_+} f$. See, for example, \cite{kat} for the discussion of
the duality for the sonc and sage cones and the applications in
optimization.  We also note that the special case $k=1$ of
Corollary~\ref{pos:stellen:satz} amounts to a description of
non-negative sage polynomials in terms of the so-called reduced
circuits; see \cite{kat}. That is, in the univariate situation,
an analog of a reduced circuit in the setting of copositive
$(2k+1)$-nomials is a set of $2k+1$ consecutive integer values, since
the support of polynomials in $t^i \Sigma_{2k}$ is a subset of
$\{i,\ldots,i+2k\}$.
	
We  stress that the sparsity threshold of sage and sonc polynomials
is bound to the dimension, since a circuit in dimension $n$ cannot
have more than $n+2$ elements. In contrast to this, in our setting
the sparsity threshold $2k+1$ may vary arbitrarily, bridging the sage
relaxations with the dense standard relaxations of Lasserre. While
being aware that our results are limited to the univariate case, we
hope that such results might serve as an inspiration for similar
studies in an arbitrary dimension $n$. That is, it would be
interesting to understand properties of sos and moment  relaxations
that have a variable sparsity threshold.
\end{rem}

\begin{rem}
The above discussion of sparse polynomial optimization over $\R_+$
can also be used to handle sparse polynomial optimization on an
arbitrary interval $I \subseteq \R_+$. For example, when $2k < d -1$
and $I=[0,1]$, in view of Corollaries~\ref{cor2moncasthm}(a)  and
\ref{pos:stellen:satz}, the cone of sums of $(2k+1)$-nomials
of degree at most $d$  that are non-negative on $[0,1]$ can be
described as $\SOCF_{2k+1,d}  + (t-1) \SOCF_{2k,d-1}$.
For both of the involved SOCF cones  we have semidefinite
descriptions of  size $k+1$. This leads to a semidefinite formulation
of the optimization problem $\inf_{t \in [0,1]} p(t)$ for $p$ a
polynomial with $|\supp(p)| \le 2k+1$ using $\mathcal{O}(d)$ LMIs of
size $k+1$, analogous to the formulation \eqref{sparse:moment} given
in Proposition~\ref{sparserelax}.
\end{rem}

\subsection{Sparse semidefinite relaxations with a chordal-graph sparsity} \label{par:chordal:sparsity}	We consider graphs $G=(V,E)$ where $|V| < \infty$ and edges $e \in E$ are two-element subsets of $V$. A \emph{cycle} of  length $k$ in $G$ is a connected subgraph of $G$ with $k$ nodes and each node having degree two. A \emph{chord} of a cycle $C$ in $G$ is an edge of $G$ that connects two nodes of $C$ but is not an edge of $C$. A graph $G$ is said to be \emph{chordal} if every cycle of $G$ of length at least four has a chord in $G$. A \emph{clique} $W \subseteq V$ of $G=(V,E)$ is a set of nodes with any two distinct nodes in $W$ connected by an edge. The following result provides a convenient representation of the cone of psd matrices with the chordal sparsity pattern:

\begin{thm}[{see \cite{agler} and \cite{van} Sect.~9.2}] \label{chordal:psd:thm}
Let $G=(V,E)$ be a graph with $n = |V|<\infty$. With $G$ we associate the cone
\[
\sfS_{+,E}^V := \setcond{ (a_{ij})_{i,j \in V}
\in \sfS_+^n}{a_{ij} = 0 \ \text{when}\ i \ne j \ \text{and} \ \{i,j\} \not\in E}.
\]
If $G$ is chordal, then this cone admits the decomposition
\[
\sfS_{+,E}^V = \sfS_{+}^{V_1} + \cdots + \sfS_{+}^{V_N},
\]
where $V_1,\ldots,V_N \subseteq V$ are all inclusion-maximal cliques
of $G$ and, for $W \subseteq V$,
\[
\sfS_+^{W} := \setcond{ (a_{ij})_{i,j\in V} \in \sfS_+^n}
{ a_{ij} =0 \ \text{for} \ (i,j) \not \in W \times W}.
\]
\end{thm}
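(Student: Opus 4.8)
The inclusion ``$\supseteq$'' needs no chordality, so I would dispatch it first. Each summand $\sfS_+^{V_i}$ consists of positive semidefinite matrices whose only possibly-nonzero entries occupy positions in $V_i \times V_i$; since $V_i$ is a clique, every off-diagonal such position is an edge of $G$, whence $\sfS_+^{V_i} \subseteq \sfS_{+,E}^V$. As $\sfS_{+,E}^V$ is closed under addition, the whole sum lands in it.

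For the substantial direction ``$\subseteq$'', the plan is to induct on $n = |V|$, exploiting the classical fact that every chordal graph possesses a \emph{simplicial} vertex $v$, i.e.\ one whose neighborhood $N(v)$ is a clique (equivalently, the first vertex of a perfect elimination ordering). Ordering the vertices so that $v$ comes first, I would write a given $A \in \sfS_{+,E}^V$ in block form
\[
A = \begin{pmatrix} a & b^\top \\ b & D \end{pmatrix}, \qquad a = A_{vv} \in \R,\ \ b = (A_{wv})_{w \ne v},\ \ D \in \sfS^{n-1}.
\]
Because the only neighbors of $v$ are the vertices of $N(v)$, the vector $b$ is supported on $N(v)$. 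If $a = 0$, positive semidefiniteness forces $b = 0$, so $A$ already lives on $V \setminus \{v\}$ and the inductive hypothesis applies to the induced subgraph $G' := G[V \setminus \{v\}]$, which is again chordal. If $a > 0$, I would peel off the rank-one matrix
\[
A_0 := \frac{1}{a}\begin{pmatrix} a \\ b \end{pmatrix}\begin{pmatrix} a & b^\top \end{pmatrix} = \begin{pmatrix} a & b^\top \\ b & \tfrac{1}{a}\,bb^\top \end{pmatrix} \succeq 0,
\]
whose support lies in $C \times C$ with $C := \{v\} \cup N(v)$ a clique, so $A_0 \in \sfS_+^{C}$. The remainder $A - A_0$ has vanishing $v$-row and $v$-column and is carried by the Schur complement $D - \tfrac{1}{a}\,bb^\top$, which is positive semidefinite precisely because $A$ is.

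The step that makes the induction close is the observation that subtracting $\tfrac{1}{a}\,bb^\top$ from $D$ creates no new nonzero entries off the edge set: since $b$ is supported on the clique $N(v)$, the correction is supported on $N(v) \times N(v)$, all of whose off-diagonal positions are already edges of $G$. Hence $D - \tfrac{1}{a}\,bb^\top$ lies in the analogous cone for the chordal graph $G'$, and by induction it decomposes as a sum over the maximal cliques of $G'$. Every such clique is a clique of $G$ as well, hence contained in a maximal clique of $G$; embedding these summands as $n \times n$ matrices with zero $v$-row and $v$-column, and adding $A_0 \in \sfS_+^{C} \subseteq \sfS_+^{V_j}$ for any maximal clique $V_j \supseteq C$, yields the desired representation of $A$.

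The main obstacle, and the only genuinely combinatorial input, is the existence of a simplicial vertex in every chordal graph together with the stability of chordality under vertex deletion; granting these, the linear-algebra content is the elementary Schur-complement identity, and the remaining bookkeeping reduces to the routine fact that a clique of an induced subgraph is still a clique upstairs. I would cite the standard graph-theoretic source for the simplicial-vertex property and otherwise present the argument as above.
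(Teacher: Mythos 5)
This theorem is stated in the paper as a known result, cited to Agler--Helton--McCullough--Rodman and to Vandenberghe--Andersen, Sect.~9.2; the paper supplies no proof of its own, so there is nothing to compare against line by line. Your argument is correct and is essentially the standard proof of this decomposition: induction on $|V|$ via a simplicial vertex (Dirac's lemma), peeling off the rank-one block $A_0$ supported on the clique $\{v\}\cup N(v)$, and observing that the Schur complement $D-\tfrac1a\,bb^\top$ is positive semidefinite and introduces no fill-in outside $N(v)\times N(v)$, so that the induced (still chordal) graph $G[V\setminus\{v\}]$ carries the remainder. The two degenerate points are handled properly: when $a=0$ the zero diagonal entry kills the whole row and column, and in the final bookkeeping each clique of $G'$ sits inside a maximal clique of $G$, with the unused maximal cliques receiving the zero summand. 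This matches the treatment in the cited references (which phrase the same induction in terms of a perfect elimination ordering or a clique tree), so you may safely present it as you propose, citing the simplicial-vertex property.
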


\begin{rem}
$\sfS_{+,E}^V$ is the so-called cone of psd matrices with the sparsity
pattern of $G=(V,E)$.
If $G$ is chordal, $\sfS_{+,E}^V$  is known to have favorable theoretical
and computational properties, which allow one to solve conic optimization
problems with respect to the cones $\sfS_{+,E}^V$ more efficiently than
the problems with respect to  $\sfS_+^n$.
For example,
computations with barrier functions can be done more efficiently on
$\sfS_{+,E}^V$ since there are more efficient ways to carry out Cholesky
factorization for matrices in $\sfS_{+,E}^V$ when the graph $G=(V,E)$ is
chordal. We refer to \cite{zheng} Appendix~A for further details.
\end{rem}

\begin{rem}
As a consequence of Theorem~\ref{chordal:psd:thm} we see that, for a
chordal graph $G=(V,E)$, the semidefinite extension degree of
$\sfS_{+,E}^V$ is the clique number of $G$, i.e., the maximum clique
size of $G$.
\end{rem}

For positive integers $n$ and $k$ with $k < n$, the cone
\begin{equation*}
\sfS_{+,k}^{n} = \setcond{ (a_{ij} )_{i,j=0,\ldots,n-1} \in
\sfS_+^n }{a_{ij} =0 \ \text{for} \ |i-j| > k}
\end{equation*}
is the cone $\sfS_{+,E}^V$ for the graph $G = (V,E) $ with vertex set
$V = \{0,\ldots,n-1\}$ and edge set $E$ consisting of  $\{i,j\}$ that
satisfy $0  < |i-j| \le k$. In \cite{van} Sect.~8.2, $\sfS_{+,k}^n$
is called the cone of psd matrices with the  \emph{band sparsity
pattern of the band width $2k+1$}.   It is clear that the graph $G$
defining $\sfS_{+,k}^n$ is chordal and that inclusion-maximal cliques
of  $G$ are sets of the form $\{i,\ldots,i+k\}$ with $0 \le i < n -k$.
We show that, similarly to how  $\sfS_+^n$  is used for  representing
sos cones and truncated quadratic modules, the cone  $\sfS_{+,k}^n$
can be used for representing the cone $\SOCF_{2k+1,n}$. That is,
optimization problem \eqref{socf:relax} from Proposition~\ref{sparserelax}
can be formulated as a conic problem with respect to the cones
$\sfS_{+,k}^n$. Such  formulations may have computational advantages
if used in solvers that can exploit the chordal sparsity.

\begin{prop}
Let $d,k$ be positive integers with $2 k  < d$.
Then the cone $\SOCF_{2k +1,d}$ admits a representation as a linear
image of  $\sfS_{+,k}^{e+1} \times \sfS_{+,k}^e$, if $d=2e$ is even,
and as a linear image of
$\sfS_{+,k}^{e+1} \times \sfS_{+,k}^{e+1},$ if $d = 2e+1$ is odd.
\end{prop}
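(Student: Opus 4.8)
The plan is to exhibit an explicit linear map from the product of band cones onto $\SOCF_{2k+1,d}$, using the identity $\SOCF_{2k+1,d}=t^0\Sigma_{2k}+\cdots+t^{d-2k}\Sigma_{2k}$ from Corollary~\ref{pos:stellen:satz} together with the chordal decomposition of Theorem~\ref{chordal:psd:thm}. Write $u_m=(1,t,\dots,t^{m-1})$ for the row vector of monomials and consider the quadratic map $\Psi_m\colon\sfS^m\to\R[t]$, $\Psi_m(A)=u_m\,A\,u_m^\top$; this is the map already used (for $m=k+1$) in Remark~\ref{semidefblockrep}, where $\varphi(\sfS_+^{k+1})=\Sigma_{2k}$, so $\Psi_{k+1}=\varphi$.

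The key computation is the following. The graph defining $\sfS_{+,k}^m$ is chordal with inclusion-maximal cliques $W_i=\{i,\dots,i+k\}$ for $0\le i\le m-k-1$, so Theorem~\ref{chordal:psd:thm} gives $A=\sum_i B_i$ with $B_i\in\sfS_+^{W_i}$. Since the entries of $u_m$ indexed by $W_i$ are $t^i,\dots,t^{i+k}$, a direct expansion yields $\Psi_m(B_i)=t^{2i}\varphi(M_i)$, where $M_i$ is the principal block of $B_i$ on $W_i$, viewed as a $(k+1)\times(k+1)$ matrix, and $M_i\succeq0$ iff $B_i\succeq0$. As $M_i$ runs through $\sfS_+^{k+1}$ this produces exactly $t^{2i}\Sigma_{2k}$, and therefore
\[
\Psi_m\bigl(\sfS_{+,k}^m\bigr)\>=\>\sum_{i=0}^{m-k-1}t^{2i}\,\Sigma_{2k},
\]
i.e.\ a single band cone realizes precisely the \emph{even}-shifted summands.

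To capture the odd shifts as well, I would combine two band matrices by the linear map $\Phi(A,A')=\Psi_{n_1}(A)+t\cdot\Psi_{n_2}(A')$; the second term contributes $\sum_i t^{2i+1}\Sigma_{2k}$. The image of the product cone under $\Phi$ is the Minkowski sum of the two images, which is the sum of the even- and odd-shifted summands, hence every summand $t^j\Sigma_{2k}$ with $0\le j\le d-2k$, that is, $\SOCF_{2k+1,d}$. It then remains to choose $n_1,n_2$ so that the base shifts land in exactly the right range. Counting maximal cliques shows that the first cone must realize the even $j\le d-2k$ and the second the odd $j\le d-2k$; this forces $(n_1,n_2)=(e+1,e)$ when $d=2e$ and $(n_1,n_2)=(e+1,e+1)$ when $d=2e+1$. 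One checks that $2k<d$ guarantees $k<n_1,n_2$, so both band cones are well defined, and that the maximal degrees $2(n_1-1)$ and $2n_2-1$ occurring in the two summands do not exceed $d$.

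The conceptual heart — and the only genuinely nontrivial step — is the parity phenomenon: placing a $(k+1)\times(k+1)$ sos block on the clique $W_i$ forces the base shift $t^{2i}$ rather than $t^i$, so one band cone alone can never reach the odd-indexed summands of $\SOCF_{2k+1,d}$. Recognizing this and compensating with a second, $t$-twisted band matrix is precisely what the two-factor product is for; once the parity split is in place, the remaining size bookkeeping is routine.
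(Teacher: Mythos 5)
Your proof is correct and follows essentially the same route as the paper: the same evaluation map $u\,A\,u^\top$, the chordal clique decomposition of the band cone from Theorem~\ref{chordal:psd:thm}, the observation that a psd block on the clique $\{i,\dots,i+k\}$ contributes $t^{2i}\Sigma_{2k}$, and the resulting even/odd split of \eqref{socf:repr} realized by a two-factor product with a $t$-twisted second band matrix. The only difference is cosmetic: you work out the odd case $d=2e+1$ explicitly, where the paper simply calls it analogous.
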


\begin{proof}
We only consider the case of an even $d$ with  $d = 2e$, because the
case of an odd $d$ is completely analogous. We use the linear map
$\varphi : \sfS^{e+1} \to \R[t]_d$ with $\varphi(A) := \sum_{i,j=0}^e a_{ij} t^{i+j}$
for $A = (a_{ij})_{i,j=0,\ldots,e} \in \sfS^{e+1}$ from
Remark~\ref{semidefblockrep}. As mentioned in  Remark~\ref{semidefblockrep},
$\varphi(\sfS_+^W) = \Sigma_{2k}$ for $W = \{0,\ldots,k\}$, where the
notation $\sfS_+^W$ is borrowed from	Theorem~\ref{chordal:psd:thm}.
The latter implies $\varphi(\sfS_+^{V_i}) = t^{2i} \Sigma_{2k}$ for
$V_i := \{i,\ldots,i+k\}$ with $0 \le i \le d - 2k$. Consequently,
splitting the sum representing $\SOCF_{2k+1,d}$ in \eqref{socf:repr}
into two according to the parity of the exponents in $t^0,\ldots,t^{d-2k}$,
we obtain
\begin{align*}
\SOCF_{2k+1,d}
& = \sum_{i=0}^{e-k} t^{2i} \Sigma_{2k} + t \, \sum_{i=0}^{e-k-1} t^{2i} \Sigma_{2k}
\\ & = \phi \biggl(\underbrace{\sum_{i=0}^{e-k} \sfS_+^{V_i}}_{=:K} \biggr) + t \,
\phi \biggl(\underbrace{\sum_{i=0}^{e-k-1} \sfS_+^{V_i}}_{=:L} \biggr).
\end{align*}
Thus, $\SOCF_{2k+1,d}$ is a linear image of $K \times L$ under the
linear map $(A,B) \mapsto \varphi(A) + t \, \varphi(B)$. In view of
Theorem~\ref{chordal:psd:thm}, $K$ and $L$ are copies of $\sfS_{+,k}^{e+1}$
and $\sfS_{+,k}^e$ respectively. This gives the desired assertion.
\end{proof}


\begin{rem}
Wang, Magron and Lasserre \cite{wang1,wang} suggest to use the cones
$\sfS_{+,E}^V$ from chordal graphs $G=(V,E)$ to approximate sparse sos
polynomials. For a given finite set $A \subseteq \Z_+^n$, they consider
the cone
\[
\Sigma(A) := \setcond{ f \in \R[x_1,\ldots,x_n] }{f \ \text{sos and}
\ \supp(f) \subseteq A}
\]
They suggest two iterative algorithms (one in \cite{wang} and another
one in \cite{wang}) that take $A$ as an input and produce a chordal
graph $G = (V,E)$ with $V \subseteq \Z_+^n$ in order to use the image
of the repsective cone $\sfS_{+,E}^V$ under the map
\[
\varphi: (a_{\alpha,\beta})_{\alpha,\beta \in V} \mapsto
\sum_{\alpha,\beta \in V} a_{\alpha,\beta} x^{\alpha+\beta}
\]
as an approximation of $\Sigma(A)$. The algorithm in \cite{wang1}
produces a $G=(V,E)$ being a disjoint union of cliques. The graph
$G=(V,E)$ in \cite{wang1} is guaranteed to satisfy the equality
\begin{equation} \label{sigma:A:descr}
\Sigma(A) =  \setcond{f \in \phi(\sfS_{+,E}^V)}{\supp(f) \subseteq A}
\end{equation}
(see Thm.~3.3 in \cite{wang1}), but there is no guarantee that the
clique number of $G$ is small, or rather, the dependence of the clique
number of $G$ on the properties of $A$ remains unexplored.  Since the
graph $G=(V,E)$ generated by an algorithm from \cite{wang1} may have
large cliques, in \cite{wang} another approach is suggested that
generates a graph $G=(V,E)$ with a smaller number of edges. This
other approach is heuristic in the sense that there are no theoretical
guarantees for the equality $\Sigma(A) = \phi(\sfS_{+,E}^V)$ (see
Example~3.5. in \cite{wang}). It would be interesting to study the
semidefinite extension degree of the cones $\Sigma(A)$ and try to
relate these cones to the cones $\sfS_{+,E}^V$ in a non-heuristic way.
\end{rem}


\end{document}